\documentclass[12pt,reqno]{amsart}
\usepackage{fullpage}
\usepackage{times}
\usepackage{amsmath,amssymb,amsthm,url}
\usepackage[utf8]{inputenc}
\usepackage[english]{babel}
\usepackage{bbm}
\usepackage{enumerate}
\usepackage{bm}
\usepackage{graphicx}
\usepackage[colorlinks=true, pdfstartview=FitH, linkcolor=blue, citecolor=blue, urlcolor=blue]{hyperref}

\newtheorem{thm}{Theorem}[section]
\newtheorem{lem}[thm]{Lemma}
\newtheorem{prop}[thm]{Proposition}
\newtheorem{defn}[thm]{Definition}

\newtheorem{cor}[thm]{Corollary}
\newtheorem{rem}[thm]{Remark}

\newtheorem{conj}[thm]{Conjecture}

\usepackage{tabstackengine}
\stackMath

\newcommand{\F}{\mathbb{F}}
\newcommand{\N}{\mathbb{N}}
\makeatletter
\@namedef{subjclassname@2020}{%
  \textup{2020} Mathematics Subject Classification}
\makeatother
\begin{document}

\title{On the clique number of Paley graphs of prime power order}
\author{Chi Hoi Yip}
\address{Department of Mathematics \\ University of British Columbia \\ 1984 Mathematics Road \\ Canada V6T 1Z2}
\email{kyleyip@math.ubc.ca}

\subjclass[2020]{11T06; 11B30}
\keywords{Paley graph, Stepanov's method, clique number, binomial coefficient.}
\date{\today}

\maketitle

\begin{abstract}
Finding a reasonably good upper bound for the clique number of Paley graphs is an open problem in additive combinatorics. A recent breakthrough by Hanson and Petridis using Stepanov's method gives an improved upper bound on Paley graphs defined on a prime field $\mathbb{F}_p$, where $p \equiv 1 \pmod 4$. We extend their idea to the finite field $\mathbb{F}_q$, where $q=p^{2s+1}$ for a prime $p\equiv 1 \pmod 4$ and a non-negative integer $s$. We show the clique number of the Paley graph over $\mathbb{F}_{p^{2s+1}}$ is at most $\min \bigg(p^s \bigg\lceil \sqrt{\frac{p}{2}} \bigg\rceil, \sqrt{\frac{q}{2}}+\frac{p^s+1}{4}+\frac{\sqrt{2p}}{32}p^{s-1}\bigg)$. 
\end{abstract}

\section{Introduction}

Throughout the paper, we let $p$ be an odd prime, $r$ a positive integer such that $q=p^r \equiv 1 \pmod{4}$. Let $\F_q$ be the finite field with $q$ elements, and $\F_q^*=\F_q \setminus \{0\}$. For an undirected graph $G$, the clique number of $G$, denoted $\omega (G)$, is the size of a maximum clique of $G$. 

Let $q \equiv 1 \pmod 4$ be a prime power. The {\em Paley graph} defined on the finite field $\F_q$, denoted $P_q$, is the graph whose vertices are elements in $\F_q$ such that two vertices are adjacent if and only if their difference is a quadratic residue in $\F_q^*$. The condition $q \equiv 1 \pmod{4}$ is needed so that $P_q$ is undirected. 

We are interested in finding an upper bound for the size of a maximum clique $C$ of the Paley graph $P_q$. Note that for any $x\in \F_q$, $C-x$ also gives a maximum clique. Throughout the paper, let
\begin{equation}\label{maxclique}
C=\{a_1,a_2,\ldots,a_N\}    
\end{equation}
be a maximum clique in $P_q$, i.e., $N=\omega(P_q)$.  Without loss of generality, we may assume $a_1=0$. It follows that $a_2,\ldots,a_N$ are quadratic residues in $\F_q^*$ and we obtain the following naive upper bound on $\omega(P_q)$, since the number of quadratic residues in $\F_q^*$ is $\frac{q-1}{2}$.

\begin{lem}\label{t0}
If $q=p^r \equiv 1 \pmod{4}$, then $\omega(P_q) \leq \frac{q+1}{2}$.
\end{lem}

Paley graphs have many nice properties; see for example the survey \cite[Chapters 2]{thesis}. In particular, since $P_q$ is self-complementary, it is easy to improve the upper bound on $\omega(P_q)$ to $\sqrt{q}$.  This square root upper bound is known as {\em the trivial upper bound} on the clique number of a Paley graph; see for example the literature~\cite{BDR, BMR, SC, CL}. For the sake of completeness, we outline a simple proof in the following lemma.

\begin{lem}\label{tt}
If $q=p^r \equiv 1 \pmod{4}$, then $\omega(P_q) \leq \sqrt{q}$.
\end{lem}
\begin{proof}
Let $C$ be a maximum clique defined in equation \eqref{maxclique}. Let $r$ be a quadratic non-residue in $\F_q^*$, we consider the set $A=\{a_i+ra_j: 1 \leq i,j \leq N\}$. Note that if $a_i+ra_j=a_i'+ra_j'$, then $a_i-a_i'=r(a_j'-a_j)$. If $i \neq i'$ or $j \neq j'$, then we will have a quadratic residue equals a quadratic non-residue, which is impossible. So each element of $A$ is different from the others. This means that $|A|=N^2 \leq q$, i.e. $N \leq \sqrt{q}$.
\end{proof}

When $q=p^{2s}$, the upper bound $\sqrt{q}$ can be achieved by considering the subfield $\F_{p^s}$ as a Paley clique \cite{BDR}. So there is no way to improve the upper bound for the case $q$ is an even power of $p$. Therefore, our focus will be on the case when $q$ is an odd power of $p$. It is widely believed $\omega(P_p) \ll_{\epsilon} p^{\epsilon}$ for any $\epsilon>0$, which is consistent with numerical evidence. Computer experiments \cite{BR,EX} suggest that the correct order of $\omega(P_q)$ should be a polylogarithmic function in $q$, where $q$ is an odd power of a prime $p \equiv 1 \pmod 4$.

Next we revisit the known lower bounds on the clique number. 
Cohen \cite{SC} showed that $\omega(P_q) \gg \log q$. Graham and Ringrose \cite{GR} showed that the least positive integer $n(p)$ that is a quadratic non-residue modulo $p$ is of the size $\Omega(\log p \log \log \log p)$. Moreover,  Montgomery \cite{HLM} showed that this can be improved to $\Omega(\log p \log \log p)$ under the generalized Riemann hypothesis (GRH). Note that for each prime $p \equiv 1 \pmod 4$, the set $\{0,1, \cdots, n(p)-1\}$ forms a Paley clique due to the definition of $n(p)$, thus $\omega(P_p)=\Omega(\log p \log \log \log p)$. And if GRH is true, then $\omega(P_p)=\Omega(\log p \log \log p)$. These results on the lower bound of the clique number are consistent with the computer experiments.

However, finding a reasonably good upper bound remains to be an open problem in additive combinatorics \cite{CL}. The current best upper bound for $\omega(P_q)$ is of the order $\sqrt{q}$, which is the same as the above trivial bound given in Lemma \ref{tt}. Therefore, there is still a huge gap between the optimal upper bound we currently have and the lower bound for the clique number. For the case $q=p$, the current best-known bound is $\sqrt{\frac{p}{2}}+1$, which was proved by Hanson and Petridis \cite{HP} using Stepanov's method.

\begin{thm}[\cite{HP}] \label{HP}
If $p \equiv 1 \pmod{4}$, then $\omega(P_p) \leq \frac{\sqrt{2p-1}+1}{2}$.
\end{thm}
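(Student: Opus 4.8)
The plan is to run Stepanov's polynomial method on the clique $C=\{a_1,\dots,a_N\}$ normalized so that $a_1=0$. First I would record the exact arithmetic target: by completing the square, the claimed inequality $N\le\frac{\sqrt{2p-1}+1}{2}$ is equivalent to $N(N-1)\le\frac{p-1}{2}$, and $\frac{p-1}{2}$ is precisely the number of quadratic residues. So it suffices to prove the clean bound $N(N-1)\le\frac{p-1}{2}$. Throughout I would work with the character polynomial $\chi(z)=z^{(p-1)/2}$, which equals $1$ on quadratic residues and $0$ at the origin; the clique hypothesis then reads $\chi(a_i-a_j)=1$ for all $i\ne j$.

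The key structural observation I would exploit is that the derivative of $f(x)=\prod_{i=1}^N(x-a_i)$ takes quadratic-residue values on the clique: $f'(a_i)=\prod_{j\ne i}(a_i-a_j)$ is a product of nonzero quadratic residues, hence itself a nonzero quadratic residue. Consequently $f'(x)^{(p-1)/2}-1$ vanishes at every $a_i$, so $f\mid f'^{(p-1)/2}-1$. The heart of the method is to upgrade this single divisibility into high-order vanishing: out of $f$, $f'$, and the Frobenius/power structure (in which $p$-th-power factors differentiate to zero and therefore contribute multiplicity "for free"), I would assemble an explicit nonzero auxiliary polynomial $F$ that vanishes to order at least $N-1$ at each of the $N$ clique points while having degree at most $\frac{p-1}{2}$. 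The standard counting lemma—a nonzero polynomial vanishing to order $m$ at $k$ distinct points has degree at least $mk$—then forces $N(N-1)\le\deg F\le\frac{p-1}{2}$, which is exactly the desired bound.

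The step I expect to be the main obstacle, and the genuine content of the Hanson–Petridis refinement, is twofold: calibrating the construction so that the degree budget lands at $\frac{p-1}{2}$ (rather than the weaker $\approx p$ that a naive product of the factors $(x-a_i)^{(p-1)/2}-1$ gives, which only reproduces the trivial bound of Theorem~\ref{tt}), and—crucially—proving that the auxiliary polynomial $F$ is not identically zero. Degree counting is vacuous without non-degeneracy, so I would isolate a top-degree coefficient or a suitable Hasse derivative of $F$ and show it is a nonzero element of $\mathbb{F}_p$. This is where I anticipate a binomial coefficient of the form $\binom{(p-1)/2}{j}\pmod p$ entering, matching the paper's stated keyword: its non-vanishing follows from Lucas' theorem because $\frac{p-1}{2}<p$, and it is precisely this fact that certifies $F\ne 0$ and closes the argument.
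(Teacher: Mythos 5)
Your reduction of the stated bound to $N(N-1)\le\frac{p-1}{2}$ is correct, and you have correctly identified the general shape of the argument (auxiliary polynomial, high-order vanishing at the clique points, degree count, non-degeneracy as the crux). But the proposal has a genuine gap: the auxiliary polynomial $F$ is never constructed. The sentence ``I would assemble an explicit nonzero auxiliary polynomial $F$ that vanishes to order at least $N-1$ at each clique point while having degree at most $\frac{p-1}{2}$'' \emph{is} the theorem; everything before it is either routine or too weak. In particular, the ingredients you propose to build $F$ from do not lead there: the divisibility $f\mid f'^{(p-1)/2}-1$ with $f(x)=\prod_{i=1}^N(x-a_i)$ only yields $N\le (N-1)\frac{p-1}{2}$, which is vacuous, and the device of gaining multiplicity ``for free'' from $p$-th-power factors (Stepanov's trick for Weil-type bounds) has no evident calibration that lands the degree at $\frac{p-1}{2}$ here.

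What actually works (Hanson--Petridis, reproduced as the $n=N$ case of Theorem~\ref{t1} in this paper) is a different construction: one takes a \emph{linear combination} $F(x)=\sum_{i=1}^{N} c_i(x-a_i)^{N-1+\frac{p-1}{2}}-1$, where the $c_i$ solve the Vandermonde system $\sum_i c_i(-a_i)^j=0$ for $0\le j\le N-2$ and $\sum_i c_i(-a_i)^{N-1}=1$. This choice of the $c_i$ forces the top $N-1$ coefficients of $F$ to cancel, so $\deg F=\frac{p-1}{2}$, and simultaneously makes the hyper-derivatives $E^{(k)}F$ vanish at every clique point for $0\le k\le N-2$ (using $(a_j-a_i)^{\frac{p-1}{2}+k}=(a_j-a_i)^k$), giving multiplicity at least $N-1$ at each of the $N$ points and hence $N(N-1)\le\frac{p-1}{2}$. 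The non-degeneracy certificate is also not the one you guessed: it is not $\binom{(p-1)/2}{j}$ but the coefficient of $x^{\frac{p-1}{2}}$ in $F$, namely $\binom{N-1+\frac{p-1}{2}}{\frac{p-1}{2}}$, which is nonzero mod $p$ because the trivial bound $N\le\frac{p+1}{2}$ (Theorem~\ref{t0}) gives $N-1+\frac{p-1}{2}<p$. This last point is precisely what fails for prime powers $q=p^{2s+1}$ and is why the paper needs Lucas's theorem and the analysis of Theorems~\ref{t2} and~\ref{t3}; your plan, as written, does not engage with any of this.
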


Recently, Di Benedetto,  Solymosi, and White \cite{DSW} used R\'edei polynomial with Sz\H{o}nyi's extension to derive improved lower bound on the number of directions determined by a Cartesian product.

\begin{thm}[\cite{DSW}] \label{DSW}
Let $A,B\subset \F_p$ be sets each of size at least two such that $|A||B| < p$. Then the set of points $A\times B\subset \F_p^2$ determines at least $|A||B| - \min\{|A|,|B|\} + 2$ directions.
\end{thm}

As a corollary, if we take $A$ to be a clique in a Paley graph over $\F_p$, then each direction determined by $A \times A$ is either a quadratic residue in $\F_p^*$, 0, or $\infty$. So the number of directions determined by $A \times A$ is at most $\frac{p+3}{2}$, and one can recover Hanson-Petridis bound.  

Nevertheless, it is worthwhile to point out that both the polynomial method used in the proof of Theorem \ref{HP} and the key lemma \cite[Lemma 6]{DSW} used to prove Theorem \ref{DSW} only work on the prime field $\F_p$. Recently, the author \cite{Yip2} generalized Theorem \ref{DSW} into $\F_q^2$; however, the extension \cite[Theorem 6]{Yip2} has several technical assumptions due to the subfield obstruction, which prevents us from improving the trivial upper bound on $\omega(P_q)$ in a similar manner.

In this paper, we extend the idea of Hanson and Petridis \cite{HP} and give an improvement on the upper bound of $\omega(P_q)$. Before stating our main result, we recall the best known upper bound on $\omega(P_q)$, due to Bachoc, Matolcsi, and Ruzsa \cite{BMR}.

\begin{thm}[\cite{BMR}] \label{BMR}
Assume $p \equiv 1 \pmod{4}$ and $q=p^{2s+1}$. Let $\omega=\omega(P_q)$ be the clique number of $P_q$. Then 
\begin{itemize}
    \item If $[\sqrt{q}]$ is even then $\omega^2+\omega-1\leq q$.
    \item If $[\sqrt{q}]$ is odd then $\omega^2+2\omega-2\leq q$.
\end{itemize}
\end{thm}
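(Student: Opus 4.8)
The plan is to refine the counting behind Theorem~\ref{tt} by squeezing a slightly larger injective image into $\F_q$. Fix a quadratic non-residue $r$ and recall from the proof of Theorem~\ref{tt} that the sumset $A=C+rC=\{a_i+ra_j:1\le i,j\le N\}$ has exactly $N^2=\omega^2$ elements, since a coincidence $a_i+ra_j=a_k+ra_l$ forces $a_i-a_k=r(a_l-a_j)$, an impossible equality between a quadratic residue (or $0$) and $r$ times a quadratic residue (or $0$). Because $q=p^{2s+1}$ is not a perfect square, this already gives $\omega\le m$ with $m^2<q<(m+1)^2$, where $m=[\sqrt q]$. The goal is to gain the extra additive terms $\omega-1$ and $2\omega-2$ by slightly enlarging one of the two factors of the sumset.

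First I would adjoin a single point. For $t\in\F_q\setminus C$ set $A'=(C\cup\{t\})+rC=A\cup(t+rC)$; since $t+rC$ has $\omega$ elements, $|A'|=\omega^2+\omega-k_t$, where $k_t=|A\cap(t+rC)|$, and the trivial inclusion $|A'|\le q$ gives $\omega^2+\omega\le q+k_t$. Thus it suffices to locate a $t$ with $k_t\le 1$. Now a coincidence $t+ra_j=a_i+ra_l$ rearranges to $t-a_i=r(a_l-a_j)$, whose right-hand side is a quadratic non-residue or $0$; as $t\notin C$ we must have $l\ne j$ and $t-a_i$ a \emph{nonzero} non-residue. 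Hence overlaps are controlled entirely by the indices $i$ for which $t-a_i$ is a non-residue, and for each such $i$ the offending $j$ are exactly those with $a_j+r^{-1}(t-a_i)\in C$, i.e.\ the representations of the residue $r^{-1}(t-a_i)$ as a difference of clique elements. Maximality of $C$ guarantees that for every $t\notin C$ at least one difference $t-a_i$ is a non-residue (otherwise $C\cup\{t\}$ would be a larger clique), so $A'$ genuinely exceeds $A$. For the odd case I would instead adjoin two points $t_1,t_2$ and bound the combined overlap by $2$, yielding $\omega^2+2\omega-2\le q$.

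The main obstacle, and the precise place where the parity of $[\sqrt q]$ must enter, is the existence of extension points with such a tiny overlap: a worst-case $t$ could have many non-residue differences, so a pointwise choice is not enough and one needs an averaging argument. The first moment $\sum_{t\in\F_q}k_t=\sum_{c\in rC}|A|=\omega^3$ already shows the overlaps are small on average (at most $\omega$), and I expect the sharp extraction of one or two clean extension points to come from the second moment $\sum_t k_t^2=\sum_{c,c'\in rC}|A\cap(A+c-c')|$ compared against the size $q-\omega^2$ of the free space, which is pinned between $0$ and $2m$ by $m^2<q<(m+1)^2$; the residue of $m$ modulo $2$ is what should decide whether the overlap budget admits one or two points. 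As a consistency check, the spectral ratio bound for the strongly regular graph $P_q$, whose least eigenvalue is $(-1-\sqrt q)/2$, reproduces $\omega\le\sqrt q$ independently, and an integrality refinement of that bound is an alternative route to the same parity-sensitive inequalities.
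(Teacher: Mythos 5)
First, a point of orientation: the paper you were given does \emph{not} prove this statement at all --- it is quoted verbatim as Theorem 2.1 of \cite{BRM} and used as a black box --- so your proposal has to be judged on its own merits, and there it has a fatal gap at precisely the step that carries the content of the theorem. Your whole plan hinges on producing a point $t \notin C$ with overlap $k_t = |A \cap (t+rC)| \le 1$ (and, in the odd case, two points with combined overlap at most $2$). No mechanism for producing such a $t$ is given, and in fact none can exist independently of the theorem: writing $D = \F_q \setminus A$, every $t$ satisfies $k_t = \omega - |(t+rC)\cap D| \ge \omega - |D| = \omega^2 + \omega - q$, since $A$ and $D$ partition $\F_q$. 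Hence if $q \le \omega^2+\omega-2$ --- exactly the situation you must rule out in the even case --- then \emph{every} $t$ has $k_t \ge 2$. So ``there exists $t$ with $k_t \le 1$'' is not an auxiliary lemma amenable to averaging; it is equivalent in strength to the inequality $q \ge \omega^2+\omega-1$ you are trying to prove, and any proof of it must already contain the theorem. (The odd case is circular in the same way.) Your own first moment confirms this quantitatively: the theorem has content only when $\omega = [\sqrt{q}]$, and there the mean of $k_t$ over $t \notin C$ is $(\omega^3-\omega^2)/(q-\omega) \approx \omega - 1$, so typical overlaps have size $\omega$, not $O(1)$; a second moment, being roughly the square of this, cannot locate a point beating the mean by a factor of $\omega$. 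Moment statistics are moreover blind to the parity of $[\sqrt{q}]$, and you yourself concede you do not know where parity enters --- but the parity dichotomy \emph{is} the theorem. What you have is the trivial bound of Theorem~\ref{tt} plus a reduction of the statement to an assertion that is provably no easier.

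There are also secondary errors worth flagging. The claim ``maximality of $C$ guarantees at least one non-residue difference $t-a_i$, so $A'$ genuinely exceeds $A$'' is a non sequitur, and if anything points the wrong way: a non-residue difference $t-a_i$ is exactly what makes a coincidence $t+ra_j = a_i + ra_l$ possible, so maximality creates overlaps rather than preventing them, and nothing in your argument shows $k_t < \omega$. In the odd case you also neglect the possible intersection of $t_1+rC$ with $t_2+rC$ (nonempty precisely when $t_1 - t_2 \in r(C-C)$), which enters the count of $|(C \cup \{t_1,t_2\}) + rC|$. Finally, the closing suggestion that an ``integrality refinement'' of the eigenvalue ratio bound would recover these inequalities is unsupported speculation; the ratio bound for $P_q$ gives exactly $\omega \le \sqrt{q}$ and no more. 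For the record, the actual argument of Bachoc, Matolcsi and Ruzsa in \cite{BRM} is a genuinely different counting argument, not an augmentation of the Cartesian product $C \times C$ --- as the computation above shows, no argument of the latter shape can be completed.
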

Theorem \ref{BMR} implies that $\omega(P_q)\leq \sqrt{q}-1$ for approximately three quarters prime powers $q$ of the form $p^{2s+1}$.   Greaves and Soicher \cite{GS} provided a different proof of Theorem \ref{BMR} using algebraic graph theory; essentially they improved the ratio bound on the clique number.

The following theorem is the key to derive an improved upper bound on $\omega(P_q)$.

\begin{thm} \label{t1}
If $q=p^{2s+1}, p\equiv 1 \pmod{4}$, and $2 \leq n\leq N=\omega(P_q)$ satisfies 
$
\binom{n-1+\frac{q-1}{2}}{\frac{q-1}{2}}\not \equiv 0 \pmod p,
$
then $(N-1)n \leq \frac{q-1}{2}$.
\end{thm}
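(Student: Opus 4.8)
The plan is to adapt Stepanov's method exactly as Hanson and Petridis use it over $\mathbb{F}_p$ (Theorem \ref{HP}), replacing $\frac{p-1}{2}$ by $m:=\frac{q-1}{2}$ throughout. After the normalization $a_1=0$, the nonzero clique elements $a_2,\dots,a_N$ and all their pairwise differences $a_i-a_j$ (with $i\ne j$) are quadratic residues; as $q\equiv 1\pmod 4$ forces $-1$ to be a residue, the differences are residues in both orders. Since the residues are precisely the roots of $x^{m}-1$, we record the relations $a_i^{m}=1$ and $(a_i-a_j)^{m}=1$. The entire argument then reduces to producing a single nonzero polynomial $f\in\mathbb{F}_q[x]$ with $\deg f\le m$ that vanishes to order at least $n$ at each of the $N-1$ points $a_2,\dots,a_N$: granting this, $\prod_{i=2}^{N}(x-a_i)^{n}$ divides $f$, so $(N-1)n\le\deg f\le m=\frac{q-1}{2}$, as claimed.

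The construction of $f$ is the crux, and it is where the residue relations must be spent. A naive linear combination of the degree-$m$ polynomials $(x-a_j)^{m}-1$ (each vanishing at every clique point but $a_j$) is useless: the order-zero condition at $a_i$ simply reads off the single coefficient attached to $a_i$, so such combinations can only produce simple zeros. To reach multiplicity $n$ I would instead build $f$ from the shifted powers $(x-a_i)^{m+n-1}$ and their lower relatives, with coefficients given explicitly by symmetric functions of the clique and arranged so that the top-degree terms cancel and $\deg f\le m$; one then verifies the order-$n$ vanishing at each $a_i$ by a Hasse-derivative computation in which the relations $a_i^{m}=1$ and $(a_i-a_j)^{m}=1$ collapse $D^{t}f(a_i)$ to zero for $0\le t\le n-1$. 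Crucially, this verification is valid for any clique, independent of its size, which is exactly what makes the eventual inequality non-vacuous.

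Non-vanishing of $f$ is detected by a single designated coefficient. Expanding a shifted power $(x-a_i)^{m+n-1}=\sum_{k}\binom{m+n-1}{k}x^{k}(-a_i)^{m+n-1-k}$, the coefficient of $x^{n-1}$ equals $\binom{m+n-1}{n-1}(-a_i)^{m}$; since $m$ is even ($q\equiv1\pmod 4$) and $a_i^{m}=1$, this is simply $\binom{m+n-1}{n-1}=\binom{n-1+\frac{q-1}{2}}{\frac{q-1}{2}}$. Tracking this coefficient through the construction shows that, up to a nonzero scalar, it is the very quantity in the hypothesis; thus $\binom{n-1+\frac{q-1}{2}}{\frac{q-1}{2}}\not\equiv0\pmod p$ is precisely the certificate $f\ne0$, and the divisibility argument closes.

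The main obstacle is the tension inside the second paragraph: the polynomial must simultaneously have degree at most $m$ and vanish to order $n$ on the clique, and these demands conflict unless the residue relations are arranged to do all the work — pinning down the explicit coefficients so that the Hasse derivatives genuinely cancel while the top terms also cancel is the delicate step. A second, paper-specific difficulty is that the Hanson--Petridis machinery was written over the prime field, so I would have to check that each manipulation survives over $\mathbb{F}_{p^{2s+1}}$ and that the binomial condition remains the correct one; here it helps that the base-$p$ digits of $m=\frac{p^{2s+1}-1}{2}$ are all equal to $\frac{p-1}{2}$, so Lucas' theorem renders $\binom{n-1+m}{m}\bmod p$ transparent. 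Finally I would note at the outset that $n\le N\le\lfloor\sqrt q\rfloor\le m$ by Theorem \ref{tt}, which legitimizes the Hasse-derivative manipulations through order $n-1$.
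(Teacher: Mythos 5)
Your ingredients do match the paper's---shifted powers $(x-a_i)^{\frac{q-1}{2}+n-1}$, hyper-(Hasse) derivatives, the relations $(a_i-a_j)^{\frac{q-1}{2}}=1$, and a binomial coefficient as the non-vanishing certificate---but the proof itself is missing: the entire content of the theorem lives in the construction you defer with ``coefficients given explicitly by symmetric functions \dots arranged so that the top-degree terms cancel,'' and the two concrete commitments you do make are incorrect. Write $m=\frac{q-1}{2}$. You reduce the theorem to producing a nonzero $f$ of degree at most $m$ vanishing to order at least $n$ at \emph{every} one of $a_2,\dots,a_N$. When $n<N$ this uniform profile is unattainable for combinations $f(x)=\sum_{i\in S}c_i(x-a_i)^{m+n-1}-1$ over an $n$-subset $S$ of the clique: at a clique point $a_j\notin S$ one has $E^{(n-1)}f(a_j)=\binom{m+n-1}{n-1}\sum_{i\in S}c_i$, so order-$n$ vanishing off $S$ forces $\sum_{i\in S}c_i=0$ (the binomial is nonzero by hypothesis); but then at a point $a_j\in S$ one gets $E^{(n-1)}f(a_j)=-\binom{m+n-1}{n-1}c_j$, and demanding order $n$ there as well kills every $c_j$, leaving $f=-1$, which vanishes nowhere. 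The paper accepts this intrinsic deficit: it takes $S=\{a_1,\dots,a_n\}$, solves the Vandermonde system $\sum_i c_i(-a_i)^l=0$ for $0\le l\le n-2$ together with $\sum_i c_i(-a_i)^{n-1}=1$, and proves multiplicity at least $n-1$ at the $n$ chosen points and at least $n$ at the remaining $N-n$ points; the count $n(n-1)+(N-n)n=(N-1)n\le \deg f=m$ then gives the theorem. So the correct target is this non-uniform profile; your reduction, while logically valid, aims the construction at a goal the method cannot reach.

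Your certificate of non-vanishing is also misplaced. In the combination, the coefficient of $x^{n-1}$ is $\binom{m+n-1}{n-1}\sum_{i}c_i(-a_i)^m$, which under the forced normalization $\sum_i c_i=0$ equals $0$ if $0\notin S$, and equals $-\binom{m+n-1}{n-1}c_1$ if $a_1=0\in S$---in neither case is it ``up to a nonzero scalar, the very quantity in the hypothesis''; your per-term computation ignores the cancellation across the sum. The certificate actually used is the coefficient of $x^{m}$: the coefficient of $x^{m+n-1-k}$ equals $\binom{m+n-1}{k}\sum_i c_i(-a_i)^k$, which vanishes for $k\le n-2$ by the system and equals $\binom{m+n-1}{n-1}=\binom{n-1+\frac{q-1}{2}}{\frac{q-1}{2}}\not\equiv 0\pmod p$ for $k=n-1$; this single coefficient simultaneously pins $\deg f=m$ and is the only place the hypothesis enters. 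Finally, note that the constant term $-1$ is indispensable for the order-zero vanishing (expanding $f(a_j)$ reduces it to $\sum_i c_i(-a_i)^{n-1}-1=0$), a detail your sketch of the building blocks omits. To complete your argument you would need to write down the system for the $c_i$, carry out the hyper-derivative verifications, and redo the multiplicity count with the non-uniform profile---at which point you have reproduced the paper's proof.
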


In view of the statement of Theorem \ref{t1}, it is crucial to determine whether a binomial coefficient is divisible by a prime $p$. One tool that is useful for this purpose is Lucas's theorem, which states that 
if $p$ is a prime and if $m,n$ are non-negative integers with base-$p$ representation
$$
m=m_{k}p^{k}+m_{k-1}p^{k-1}+\cdots +m_{1}p+m_{0}=(m_k, m_{k-1}, \cdots, m_{0})_p,
$$
$$
n=n_{k}p^{k}+n_{k-1}p^{k-1}+\cdots +n_{1}p+n_{0}=(n_k, n_{k-1}, \cdots, n_{0})_p,
$$
where $0 \leq m_j,n_j \leq p-1$ for each $0 \leq j \leq k$, then 
$$
\binom{m}{n} \equiv \prod_{j=0}^{k} \binom{m_j}{n_j} \pmod p.
$$
In particular, $\binom{m}{n} \not \equiv 0 \pmod p$ if and only if $n_j \leq m_j$ for each $0 \leq j \leq k$.

The following theorem is our main result, which can be deduced from Theorem \ref{t1} and Lucas's theorem.

\begin{thm}\label{main}
Assume $p \equiv 1 \pmod{4}$ and $q=p^{2s+1}$ for some nonneagtive integer $s$, then 
\begin{equation} \label{maint}
\omega(P_q) \leq \min \bigg(p^s \bigg\lceil \sqrt{\frac{p}{2}} \bigg\rceil, \sqrt{\frac{q}{2}}+\frac{p^s+1}{4}+\frac{\sqrt{2p}}{32}p^{s-1}\bigg).    
\end{equation}
\end{thm}

In particular, Theorem \ref{main} implies that $\omega(P_q) \leq \sqrt{q/2}+o(\sqrt{q})$, which improves Theorem \ref{BMR} by a multiplicative constant. One can use the uniform distribution of the fractional parts of $\{\sqrt{p}: p \equiv 1 \pmod 4\}$ (see for example \cite[Corollary 6.4]{Yip2}) to deduce that both bounds on the right-hand-side of equation \eqref{maint} take the lead infinitely often. 

We will extend the notion of derivatives to the finite field in Section \ref{sec2}. The proof of the main result will be given in Section \ref{sec3}. In Section \ref{sec4}, we will describe a variant of Theorem \ref{t1}, which possibly leads to the following (further) improved upper bound on $\omega(P_q)$.

\begin{conj}\label{conj1}
There is some constant $c>0$, such that if $p \equiv 1 \pmod{4}$, and $q=p^{2s+1}$ for some positive integer $s$, then $\omega(P_q) \leq \sqrt{\frac{q}{2}}+cp^{s-1}$.
\end{conj}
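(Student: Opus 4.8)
The plan is to convert the divisibility hypothesis of Theorem \ref{t1} into a transparent digit condition, and then to attack the resulting extremal problem about the spacing of usable exponents, enlarging the supply of such exponents until the gaps near $\sqrt{q/2}$ shrink to the scale $p^{s-1}$.

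First I would record the exact base-$p$ expansion of $\frac{q-1}{2}$. Writing $t=\frac{p-1}{2}$ and $q=p^{2s+1}$, one has $\frac{q-1}{2}=t(1+p+\cdots+p^{2s})$, so in base $p$ it reads $(t,t,\ldots,t)_p$ with $2s+1$ digits all equal to $t$. By Kummer's theorem, $v_p\binom{n-1+\frac{q-1}{2}}{\frac{q-1}{2}}$ is the number of carries in adding $n-1$ and $\frac{q-1}{2}$ in base $p$; since every digit of $\frac{q-1}{2}$ is $t$ and $t+t=p-1$, there are no carries precisely when every base-$p$ digit of $n-1$ is at most $t$. Thus the hypothesis of Theorem \ref{t1} is equivalent to the clean statement that all base-$p$ digits of $n-1$ are $\le \frac{p-1}{2}$; call such $n$ \emph{admissible}. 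Feeding the largest admissible $n\le N$ into Theorem \ref{t1} gives $N\le \frac{(q-1)/2}{n}+1$, and since the admissible integers are exactly those whose restricted expansion reads as a base-$\frac{p+1}{2}$ numeral, two consecutive admissible integers separated by a carry across $j$ trailing digits differ by $\frac{p^j+1}{2}$. Near $\sqrt{q/2}=p^s\sqrt{p/2}$ the largest such gap is $\frac{p^s+1}{2}$, and this is precisely what produces the $O(p^s)$ error term in Theorem \ref{main}.

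Given this picture, the route to Conjecture \ref{conj1} is to manufacture a denser supply of usable exponents, so that some usable $n$ lies within $O(p^{s-1})$ of $\sqrt{q/2}$ from below while still yielding an inequality of the form $(N-1)n\le \frac{q-1}{2}+O(p^{s-1}N)$. Concretely, I would develop, in the spirit of Section 4, a variant of Theorem \ref{t1} in which the single binomial condition is relaxed to tolerate a controlled number $\nu=v_p\binom{n-1+\frac{q-1}{2}}{\frac{q-1}{2}}$ of carries, the conclusion degrading to something of the shape $(N-\Delta_\nu)\,n\le \frac{q-1}{2}+E_\nu$ with $\Delta_\nu,E_\nu$ explicit in the carry pattern. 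One would then optimize over $n$ in the critical window $\bigl(\sqrt{q/2}-cp^{s-1},\,N\bigr]$, choosing the lower digits of $n$ so as to reach within $p^{s-1}$ of the top of the offending gap while keeping $\nu$, and hence $\Delta_\nu$ and $E_\nu$, small enough that the resulting bound reads $\omega(P_q)\le \sqrt{q/2}+O(p^{s-1})$.

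The hard part is exactly this balancing act, and it is the reason the statement is only a conjecture. The integers $n$ lying in the critical gap just below $(a+1)p^s$ necessarily have lower part exceeding $\frac{p^s-1}{2}=(t,\ldots,t)_p$, so at least one high-order lower digit exceeds $t$; to come within $p^{s-1}$ of the top of the gap one is forced to take the digit in position $s-1$ essentially equal to $p-1$, which triggers a cascade of carries and pushes $\nu$ up to order $s$. Thus the exponents one most wants to use are precisely those on which any naive degradation of Theorem \ref{t1} is weakest, so a successful proof must instead produce a Stepanov-type polynomial vanishing to high order on the clique while absorbing these many carries at a cost of only $O(p^{s-1})$ — a genuinely finer construction than the one underlying Theorem \ref{t1}, and this is where I expect the real difficulty to lie.
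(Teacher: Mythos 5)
You should first note that the statement you were asked to prove is Conjecture \ref{conj1}: the paper does not prove it either, and your write-up correctly identifies itself as incomplete, which is the right assessment of the state of the art. Your preliminary analysis is accurate and matches the paper's: the hypothesis of Theorem \ref{t1} is exactly that every base-$p$ digit of $n-1$ is at most $\frac{p-1}{2}$ (the paper phrases this via Lucas's Theorem, you via Kummer's; they are equivalent here), the admissible $n$ have gaps of size $\frac{p^j+1}{2}$, and the gap of size $\frac{p^s+1}{2}$ at the scale of $\sqrt{q/2}$ is what produces the $O(p^s)$ error term in Theorem \ref{main}. This is precisely the case $z_{s-1}>\frac{p+1}{2}$ singled out in Section 4 of the paper.

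Where you diverge from the paper, and where your plan has a conceptual flaw, is the proposed carry-tolerant variant $(N-\Delta_\nu)\,n\le \frac{q-1}{2}+E_\nu$. When $\binom{n-1+\frac{q-1}{2}}{\frac{q-1}{2}}\equiv 0 \pmod p$, the proof of Theorem \ref{t1} does not degrade gracefully: the coefficient of $x^{\frac{q-1}{2}}$ in $f$ vanishes and the constructed polynomial may be identically zero, so the method yields nothing at all rather than a weaker inequality; there is no natural candidate for $\Delta_\nu$ or $E_\nu$. The paper's Section 4 circumvents the vanishing differently: it drops the constant $-1$ from $f$, retains only the linear constraints indexed by the set $L(n)$ (whose structure Lemma \ref{L} controls), and normalizes the coefficient at a different exponent $m\in M$ where the binomial coefficient survives (Theorem \ref{t5}); with $n$ chosen so that $n\le N<n+p^{s-1}$, this yields $n(N-2)\le\frac{q-3}{2}$ and hence the conjectured bound, \emph{provided} the system $(E_{n,m,D})$ is solvable for some $n$-subset $D$ of the clique, which by Lemma \ref{t6} reduces to a linear-independence statement about the matrix $A_{m,n}$ (Conjecture \ref{con}, shown in Theorem \ref{t7} to imply Conjecture \ref{conj1}). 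So the paper converts the difficulty into a concrete open question of linear algebra over $\F_q$, whereas your plan leaves open the vaguer task of designing a new Stepanov-type polynomial that absorbs many carries; both routes stall at the same underlying obstruction, but the paper's reduction is the sharper and more actionable formulation, and it is the piece missing from your proposal.
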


\section{Hyper-derivatives}\label{sec2}

The following is a well-known relation between the multiplicity of roots and the derivatives.

\begin{lem}\label{rootR}
Let $0 \neq f \in K[x]$, where $K$ is a field with characteristic zero. Suppose $c$ is a root of $f^{(n)}(x)$ for $n=0,1,\ldots, m-1$, then $c$ is a root of multiplicity at least $m$. 
\end{lem}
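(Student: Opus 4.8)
The plan is to reduce the statement to the divisibility $(x-c)^m \mid f(x)$ in $F[x]$, which is precisely the assertion that $c$ is a root of $f$ of multiplicity at least $m$. The cleanest route is through the Taylor expansion of $f$ about $c$. First I would record the formula that for any $f \in F[x]$ of degree $d$ and any $c \in F$,
$$
f(x) = \sum_{k=0}^{d} \frac{f^{(k)}(c)}{k!}(x-c)^k.
$$
This is where the hypothesis that $F$ has characteristic zero enters essentially, since each $k!$ must be an invertible scalar. I would justify the identity by expanding $f$ in the basis $\{(x-c)^k\}_{k=0}^{d}$, differentiating $k$ times, and evaluating at $c$ to read off that the coefficient of $(x-c)^k$ is exactly $f^{(k)}(c)/k!$.

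Given the formula, the conclusion is immediate: the hypothesis $f^{(n)}(c)=0$ for $n=0,1,\ldots,m-1$ annihilates the first $m$ summands, so
$$
f(x) = \sum_{k=m}^{d} \frac{f^{(k)}(c)}{k!}(x-c)^k = (x-c)^m g(x), \qquad g(x)=\sum_{k=m}^{d}\frac{f^{(k)}(c)}{k!}(x-c)^{k-m}\in F[x],
$$
whence $(x-c)^m \mid f$, as desired. A more elementary alternative, which avoids invoking the full Taylor expansion, is induction on $m$: the case $m=1$ is the definition of a root, and assuming $(x-c)^{m-1}\mid f$ one writes $f=(x-c)^{m-1}g$ and applies the Leibniz rule to $f^{(m-1)}$. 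Every term other than the one that differentiates the factor $(x-c)^{m-1}$ exactly $m-1$ times retains a factor of $(x-c)$, so evaluating at $c$ leaves $f^{(m-1)}(c)=(m-1)!\,g(c)$; since $(m-1)!\neq 0$ in characteristic zero and $f^{(m-1)}(c)=0$, we obtain $g(c)=0$, i.e. $(x-c)\mid g$ and therefore $(x-c)^m\mid f$.

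The only genuine obstacle — and the reason the lemma is confined to characteristic zero — is the invertibility of the factorials $k!$ (equivalently $(m-1)!$ in the inductive version). In positive characteristic this breaks down as soon as $k\geq p$, and the ordinary derivative then destroys too much information for the argument to survive. This deficiency is exactly what the hyper-derivative developed in Section~2 is designed to repair, so I would flag the characteristic-zero dependence explicitly here in order to motivate the machinery that follows.
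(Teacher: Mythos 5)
Your proof is correct, and it is essentially the paper's own approach: the paper states this lemma without proof (as well known), but proves the hyper-derivative analogue (Lemma \ref{root}) by exactly the argument you give, namely expanding $f$ in powers of $(x-c)$ and observing that the hypotheses force the first $m$ coefficients to vanish. In characteristic zero those coefficients are $f^{(k)}(c)/k!$, so your Taylor-expansion version (and your inductive alternative) is the same decomposition, with the invertibility of $k!$ playing precisely the role you flag as the reason the hyper-derivative is needed later.
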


However, the same result fails to hold for fields with nonzero characteristic. This is because if $\operatorname{char} K=p>0$, then for any polynomial $f \in K[x]$, we have $f^{(p)} (x) \equiv 0$. This means we need to modify the definition of derivative in order to overcome the nonzero characteristic, and a good idea is to introduce the binomial coefficients into the derivatives \cite{LN}.

\begin{defn}
Let $K$ be a field and let $b_0,b_1, \ldots b_d \in K$. If $n$ is a non-negative integer, then the $n$-th order hyper-derivative of $f(x)=\sum_{j=0}^d b_j x^j$ is
$$
E^{(n)}(f) =\sum_{j=0}^d \binom{j}{n} b_j x^{j-n}.
$$
\end{defn}

Hyper-derivatives are also known as Hasse derivatives. 
Note that $E^{(1)}$ matches with the usual first order derivative. And if $\operatorname{char} K=0,$ or $\operatorname{char} K>n!$ then $E^{(n)}(f)=\frac{1}{n!} f^{(n)}$. Readers can refer to \cite[Chapter 6]{LN} for a comprehensive applications of hyper-derivatives. Here we provide a self-contained overview of some simple properties of hyper-derivatives. 

The following is analogous to the Leibniz rule for standard derivatives.

\begin{lem} [Leibniz rule for hyper-derivatives]
If $f_1, \ldots, f_t \in K[x]$, then
$$
E^{(n)}(f_1 \ldots f_t)= \sum_{\substack{n_1,\ldots n_t \geq 0,\\ n_1+ \ldots +n_t=n}} E^{(n_1)} (f_1) \ldots E^{(n_t)} (f_t) 
$$
\end{lem}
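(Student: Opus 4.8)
The plan is to prove the Leibniz rule by encoding the entire family of hyper-derivatives into a single generating identity of Taylor type, and then exploiting the fact that this encoding turns products of polynomials into products of generating functions. This sidesteps any case analysis on the $f_i$ and reduces the whole statement to a routine coefficient comparison.

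First I would establish the key identity that for any $f \in K[x]$ of degree $d$, working in the polynomial ring $K[x,y]$,
$$
f(x+y) = \sum_{n=0}^{d} E^{(n)}(f)(x)\, y^n,
$$
where $E^{(n)}(f)(x)$ denotes the hyper-derivative regarded as a polynomial in $x$. This follows directly from the definition: writing $f(x)=\sum_{j=0}^d a_j x^j$ and expanding each $(x+y)^j$ by the binomial theorem gives $f(x+y)=\sum_j a_j \sum_n \binom{j}{n} x^{j-n} y^n$, and collecting the coefficient of $y^n$ recovers exactly $\sum_j \binom{j}{n} a_j x^{j-n}=E^{(n)}(f)(x)$. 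It is essential here that the binomial theorem is a purely integral (hence characteristic-free) identity, so no division by factorials is needed and the formula is valid over an arbitrary field $K$.

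Next I would apply this identity to the product $f_1 \cdots f_t$. On the one hand, the identity gives $(f_1 \cdots f_t)(x+y)=\sum_{n \geq 0} E^{(n)}(f_1 \cdots f_t)(x)\, y^n$. On the other hand, since $(f_1 \cdots f_t)(x+y)=\prod_{i=1}^t f_i(x+y)$, I can expand each factor and multiply out:
$$
\prod_{i=1}^t f_i(x+y) = \prod_{i=1}^t \Bigg( \sum_{n_i \geq 0} E^{(n_i)}(f_i)(x)\, y^{n_i} \Bigg) = \sum_{n \geq 0} \Bigg( \sum_{\substack{n_1,\ldots,n_t \geq 0 \\ n_1+\cdots+n_t=n}} E^{(n_1)}(f_1) \cdots E^{(n_t)}(f_t) \Bigg) y^n.
$$
Comparing the coefficients of $y^n$ in the two expressions — valid because two polynomials in $y$ over the ring $K[x]$ agree if and only if all of their coefficients agree — yields precisely the claimed Leibniz formula.

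I expect no serious obstacle in this argument; the only point demanding genuine care is the one flagged above, namely that every manipulation must remain at the level of integer-coefficient binomial identities, so as not to accidentally invoke a factorial inverse that may vanish in positive characteristic. A self-contained alternative, avoiding generating functions entirely, is to first verify the case $t=2$ by reducing to monomials $f_1=x^a$, $f_2=x^b$, where the identity becomes the Chu--Vandermonde convolution $\binom{a+b}{n}=\sum_{n_1+n_2=n}\binom{a}{n_1}\binom{b}{n_2}$, then extend to arbitrary $f_1,f_2$ by bilinearity of both sides, and finally induct on $t$.
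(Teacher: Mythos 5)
Your proposal is correct, and it takes a genuinely different route from the paper. The paper argues by linearity to reduce to the case where every $f_i$ is a monomial $x^{k_j}$, and then proves the resulting identity between binomial coefficients, $\binom{k}{n}=\sum_{n_1+\cdots+n_t=n}\prod_j\binom{k_j}{n_j}$, by comparing coefficients of $x^n$ on both sides of $(1+x)^k=\prod_j(1+x)^{k_j}$ --- i.e.\ a multinomial Chu--Vandermonde convolution. You instead prove the Taylor-type identity $f(x+y)=\sum_n E^{(n)}(f)(x)\,y^n$ in $K[x,y]$ (which is characteristic-free since only integer binomial coefficients appear), apply it to $f_1\cdots f_t$, expand $\prod_i f_i(x+y)$ factor by factor, and compare coefficients of $y^n$. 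Your argument needs no reduction to monomials and no separate combinatorial identity; it is more conceptual in that it identifies the hyper-derivatives as the Taylor coefficients of $f(x+y)$ (indeed this is how Hasse derivatives are often introduced), and it makes the Leibniz rule literally the statement that polynomial multiplication corresponds to multiplication of these generating expansions. The two proofs carry the same combinatorial content --- your coefficient comparison in $y$, specialized to monomials, is exactly the paper's coefficient comparison in $(1+x)^k=\prod_j(1+x)^{k_j}$ --- and your closing ``self-contained alternative'' (Chu--Vandermonde for $t=2$ plus bilinearity and induction on $t$) is essentially the paper's proof, modulo doing all $t$ factors at once rather than inducting.
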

\begin{proof}
Note that hyper-derivatives are linear. So it suffices to consider the case for monomials. Assume $f_j(x)=x^{k_j}$ for $1 \leq j \leq t$ and $k=\sum_{j=1}^t k_j$. Then
$$
E^{(n)}(f_1 \ldots f_t)=E^{(n)}(x^k)=\binom{k}{n} x^{k-n},
$$
$$
\sum_{\substack{n_1,\ldots n_t \geq 0,\\ n_1+ \ldots +n_t=n}} E^{(n_1)} (f_1) \ldots E^{(n_t)} (f_t)
=\sum_{\substack{n_1,\ldots n_t \geq 0,\\ n_1+ \ldots +n_t=n}} \Bigg(\prod_{j=1}^t \binom{k_j}{n_j}\Bigg) x^{k-n}.
$$
Consider the coefficient of $x^n$ of the two sides of the identity $(1+x)^k=\prod_{j=1}^{t} (1+x)^{k_j}$, we get
$$
\binom{k}{n}=\sum_{\substack{n_1,\ldots n_t \geq 0,\\ n_1+ \ldots +n_t=n}} \prod_{j=1}^t \binom{k_j}{n_j},
$$
which proves the proposition.
\end{proof}

\begin{cor}
$E^{(n)}\big((x-c)^t\big)=\binom{t}{n} (x-c)^{t-n}.$
\end{cor}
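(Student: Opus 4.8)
The plan is to derive this directly from the Leibniz rule just established, by writing $(x-c)^t$ as a product of $t$ identical linear factors and then exploiting the fact that the high-order hyper-derivatives of a linear polynomial vanish.

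First I would compute $E^{(m)}(x-c)$ for every $m \geq 0$ straight from the definition. Writing $x-c = (-c)x^0 + x^1$, the definition gives
$$
E^{(m)}(x-c) = \binom{0}{m}(-c)x^{-m} + \binom{1}{m}x^{1-m},
$$
so that $E^{(0)}(x-c) = x-c$, $E^{(1)}(x-c) = 1$, and $E^{(m)}(x-c) = 0$ for all $m \geq 2$, since both $\binom{0}{m}$ and $\binom{1}{m}$ vanish in that range.

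Next, applying the Leibniz rule with $f_1 = \cdots = f_t = x-c$, I would write
$$
E^{(n)}\big((x-c)^t\big) = \sum_{\substack{n_1,\ldots,n_t \geq 0,\\ n_1+\cdots+n_t=n}} \prod_{j=1}^{t} E^{(n_j)}(x-c).
$$
By the computation above, a summand is nonzero only when every $n_j \in \{0,1\}$; since the $n_j$ sum to $n$, this forces exactly $n$ of them to equal $1$ and the remaining $t-n$ to equal $0$. Each such summand contributes $1^n \cdot (x-c)^{t-n} = (x-c)^{t-n}$, and there are precisely $\binom{t}{n}$ ways to choose which indices take the value $1$. Summing over these surviving terms yields $\binom{t}{n}(x-c)^{t-n}$, as claimed.

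Finally, I would record the degenerate case $n > t$ for completeness: here no admissible $\{0,1\}$-valued tuple $(n_1,\ldots,n_t)$ summing to $n$ exists, so the Leibniz sum is empty and equals $0$, which matches $\binom{t}{n} = 0$. I do not expect any genuine obstacle in this argument, as the identity is an immediate consequence of the preceding lemma; the only point requiring a little care is the combinatorial bookkeeping that identifies exactly which terms in the Leibniz expansion survive, together with the boundary case just mentioned.
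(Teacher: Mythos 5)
Your proof is correct and follows essentially the same route as the paper: both write $(x-c)^t$ as a product of $t$ linear factors, observe that $E^{(m)}(x-c)$ vanishes for $m\geq 2$, and apply the Leibniz rule so that only $\{0,1\}$-valued tuples survive, of which there are $\binom{t}{n}$. Your added details (computing $E^{(m)}(x-c)$ directly from the definition and noting the case $n>t$) are fine but do not change the argument.
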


\begin{proof}
For $1 \leq i \leq t$, let $f_i(x)=x-c$, then $E^{(1)} (f_i)=1$, and $E^{(k)} (f_i)=0$ for $k \geq 2$.
So by the Leibniz rule, we have
$$
E^{(n)}\big((x-c)^t\big)= \sum_{\substack{n_1,\ldots n_t \in \{0,1\},\\ n_1+ \ldots +n_t=n}} (x-c)^{t-n}=\binom{t}{n} (x-c)^{t-n}.
$$
\end{proof}
Now we can establish a relation between the multiplicity of roots and the hyper-derivatives parallel to Lemma \ref{rootR}, which is crucial for the proof of our main results.

\begin{lem}\label{root}
Let $0 \neq f \in K[x]$. Suppose $c$ is a root of $E^{(n)}(f)$ for $n=0,1,\ldots, m-1$, then $c$ is a root of multiplicity at least $m$. 
\end{lem}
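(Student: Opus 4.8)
The plan is to exploit the one clean computational fact already established, namely the corollary $E^{(n)}\big((x-c)^t\big)=\binom{t}{n}(x-c)^{t-n}$, by expanding $f$ in the ``shifted'' monomial basis centered at $c$. Since $\{(x-c)^j\}_{j\geq 0}$ is a $K$-basis for $K[x]$, I would first write $f(x)=\sum_{j=0}^{d} b_j (x-c)^j$ for suitable coefficients $b_j\in K$, where $d=\deg f$. The whole argument then rests on reading off these $b_j$ from the hyper-derivatives, in exact analogy with how ordinary Taylor coefficients are recovered from ordinary derivatives in the characteristic-zero Lemma \ref{rootR} — except that here the hyper-derivative is designed precisely so that no coefficient is killed by the characteristic.

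The key step is to apply linearity of $E^{(n)}$ together with the corollary term by term:
$$
E^{(n)}(f)=\sum_{j=0}^{d} b_j\, E^{(n)}\big((x-c)^j\big)=\sum_{j=0}^{d} b_j\binom{j}{n}(x-c)^{j-n}.
$$
Now I would evaluate this at $x=c$ and check that exactly one term survives. For $j<n$ the factor $\binom{j}{n}$ vanishes as an integer, hence in $K$; for $j>n$ the factor $(x-c)^{j-n}$ vanishes at $x=c$; and for $j=n$ the term is $b_n\binom{n}{n}(x-c)^0=b_n$. Therefore $E^{(n)}(f)(c)=b_n$ for every $n$. This identity is the heart of the lemma, and the only point needing care is the case analysis above, which confirms that the extraction of $b_n$ holds regardless of $\operatorname{char}K$, since $\binom{n}{n}=1$ and $\binom{j}{n}=0$ for $j<n$ are integer identities that persist in any field.

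With this in hand the conclusion is immediate. The hypothesis says $c$ is a root of $E^{(n)}(f)$ for $n=0,1,\ldots,m-1$, so $b_n=E^{(n)}(f)(c)=0$ for each such $n$. Hence the expansion collapses to $f(x)=\sum_{j\geq m} b_j (x-c)^j=(x-c)^m\sum_{j\geq m} b_j (x-c)^{j-m}$, which exhibits $(x-c)^m$ as a divisor of $f$, so $c$ is a root of multiplicity at least $m$. I do not anticipate a genuine obstacle here: the substance is entirely in the corollary and in verifying that the term-by-term evaluation isolates $b_n$ in positive characteristic, and once $E^{(n)}(f)(c)=b_n$ is secured the divisibility statement follows formally.
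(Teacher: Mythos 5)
Your proposal is correct and follows essentially the same route as the paper: expand $f$ in the shifted basis $\{(x-c)^j\}$, apply the corollary $E^{(n)}\big((x-c)^t\big)=\binom{t}{n}(x-c)^{t-n}$ term by term to see that $E^{(n)}(f)(c)=b_n$, and conclude $b_0=\cdots=b_{m-1}=0$. Your write-up simply makes explicit the case analysis ($j<n$ killed by $\binom{j}{n}=0$, $j>n$ killed by evaluation at $c$) that the paper leaves implicit.
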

\begin{proof}
Let $f(x)=b_0+b_1(x-c)+\cdots +b_d (x-c)^d$, where $b_d \neq 0$, then
$$
E^{(n)}(f)(x)=b_n+ \binom{n+1}{n} b_{n+1} (x-c)+ \ldots +\binom{d}{n} b_d (x-c)^{d-n}.
$$
Since $c$ is a root of $E^{(n)}(f)$ for $n=0,1,\ldots, m-1$, then $b_n=0$ for $n=0,1,\ldots, m-1$. So $c$ is a root of multiplicity at least $m$. 
\end{proof}

\section{Proof of the Main Result}\label{sec3}
In this section, we first prove Theorem \ref{t1} and then use it to deduce Theorem \ref{main}. Recall that $C=\{a_1,a_2, \cdots, a_N\}$, defined in equation \eqref{maxclique}, is a maximum clique in $P_q$.

\begin{lem} \label{lem1}
For each $1 \leq i,j \leq N$, $i \neq j$, we have $(a_i-a_j)^{\frac{q-1}{2}}=1$.
\end{lem}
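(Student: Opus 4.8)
The plan is to translate the clique condition directly into Euler's criterion over $\F_q$. First I would unwind the definition of the Paley graph: since $C=\{a_1,\ldots,a_N\}$ is a clique in $P_q$, any two distinct vertices $a_i,a_j$ are adjacent, which by the very definition of $P_q$ means that their difference $a_i-a_j$ is a quadratic residue modulo $q$. Moreover $a_i-a_j\neq 0$ because $a_i\neq a_j$, so $a_i-a_j$ is a genuine nonzero square in $\F_q^*$.

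Next I would invoke the analogue of Euler's criterion valid over an arbitrary finite field. The multiplicative group $\F_q^*$ is cyclic of order $q-1$, so the nonzero squares form its unique subgroup of index two, which coincides with the kernel of the homomorphism $x\mapsto x^{\frac{q-1}{2}}$ (whose image is contained in $\{\pm 1\}$ since every value squares to $x^{q-1}=1$). Hence a nonzero element $x\in\F_q$ satisfies $x^{\frac{q-1}{2}}=1$ if and only if $x$ is a quadratic residue. Applying this with $x=a_i-a_j$ immediately yields $(a_i-a_j)^{\frac{q-1}{2}}=1$, as desired.

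Since the whole argument reduces to the cyclicity of $\F_q^*$, there is no serious obstacle here; the only point deserving a word of care is that Euler's criterion must be applied in the full field $\F_q$ rather than merely in the prime field $\F_p$, but this is automatic because $\F_q^*$ is cyclic for every prime power $q$. As a consistency check, one may note that $-1$ is a square precisely when $q\equiv 1\pmod 4$, so that $a_j-a_i=-(a_i-a_j)$ is simultaneously a quadratic residue; this reflects the fact that $P_q$ is undirected and shows the lemma is symmetric in $i$ and $j$, in agreement with its statement.
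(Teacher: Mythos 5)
Your proof is correct and essentially the same as the paper's: both reduce the claim to the structure of $\F_q^*$, where a nonzero square $x=y^2$ satisfies $x^{\frac{q-1}{2}}=y^{q-1}=1$. The paper uses only this easy direction (via Lagrange's theorem), whereas you invoke the full Euler-criterion equivalence through cyclicity of $\F_q^*$, which is slightly more machinery than needed but changes nothing of substance.
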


\begin{proof}
Let $x=a_i-a_j$, then $x \in \F_q^*$ and $x$ is a quadratic residue in $\F_q$, so $x=y^2$ for some $y \in \F_q^*$. Since $y^{q-1}=1$, we have $x^{\frac{q-1}{2}}=1$. 
\end{proof}

\begin{cor}\label{cor}
For any $k \in \N$, and any $1 \leq i,j \leq N$, we have
${(a_i-a_j)^{\frac{q-1}{2}+k}=(a_i-a_j)^{k}.}$
\end{cor}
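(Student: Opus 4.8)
The plan is to derive this directly from Lemma \ref{lem1} by multiplying through by a power of the difference, after separating the two cases $i \neq j$ and $i = j$.

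In the case $i \neq j$, set $x = a_i - a_j \in \F_q^*$. Then Lemma \ref{lem1} already gives $x^{\frac{q-1}{2}} = 1$, and I would simply multiply both sides by $x^k$ and use the exponent laws in $\F_q$ to conclude
$$
x^{\frac{q-1}{2}+k} = x^{\frac{q-1}{2}} \cdot x^k = 1 \cdot x^k = x^k,
$$
which is the asserted identity. In the case $i = j$ we have $a_i - a_j = 0$, so for $k \geq 1$ both sides vanish and the equality is trivial; thus the only substantive content lies in the case $i \neq j$, where it is an immediate consequence of Lemma \ref{lem1}.

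Since the argument is a one-line manipulation of Lemma \ref{lem1}, I do not expect any genuine obstacle here. The point of recording this corollary is practical rather than difficult: it shows that, modulo the relation satisfied by clique differences, any exponent may be reduced by $\frac{q-1}{2}$ at will. This is precisely the reduction that will later allow me to keep the degrees of the auxiliary polynomials under control in the Stepanov-style counting argument, so isolating it as a separate statement streamlines the proofs of Theorems \ref{t1} and \ref{main}.
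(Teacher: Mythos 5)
Your proof is correct and is essentially identical to the paper's: both split into the cases $i \neq j$ (where Lemma \ref{lem1} gives $(a_i-a_j)^{\frac{q-1}{2}}=1$ and one multiplies by $(a_i-a_j)^k$) and $i=j$ (where both sides vanish for $k \geq 1$). No gap; nothing further to add.
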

\begin{proof}
By Lemma \ref{lem1}, if $1 \leq i,j \leq N$, and $i \neq j$, then $(a_i-a_j)^{\frac{q-1}{2}}=1$, so for any $k>0, (a_i-a_j)^{\frac{q-1}{2}+k}=(a_i-a_j)^{k}$. Note the previous equation also holds when $i=j$ since both sides are zero. 
\end{proof}

Now we are ready to prove Theorem \ref{t1}, which can be regarded as a generalization of Theorem \ref{HP}. Note that the original proof of Theorem \ref{HP} by Hanson and Petridis implicitly uses the fact that when $q$ is a prime, the binomial coefficient $$\binom{\omega(P_q)-1+\frac{q-1}{2}}{\frac{q-1}{2}}\not \equiv 0 \pmod p.$$ This condition is crucial to ensure the polynomial they constructed is nonzero. However, this condition no longer holds if we are working on a finite field $\F_q$, and we shall see the main difficulty in extending their method to $\F_q$ is to optimize an upper bound while ensuring the polynomial we are interested in is not identically zero.

\begin{proof}[Proof of Theorem \ref{t1}]
Consider the following polynomial 
$$
f(x)=\sum_{i=1}^n c_i (x-a_i)^{n-1+\frac{q-1}{2}} -1 \in \F_q[x],
$$
where $c_1,c_2,...,c_n$ is the unique solution of the following system of equations:
\begin{equation} \label{system} \tag{F}
\left\{
\TABbinary\tabbedCenterstack[l]{
\sum_{i=1}^n c_i (-a_i)^j=0,  \quad 0 \leq j \leq n-2\\\\
\sum_{i=1}^n c_i (-a_i)^{n-1}=1
}\right.    
\end{equation}
Note the above system of equations has a unique solution since the coefficient matrix of the system is a Vandermonde matrix with parameters $a_1, a_2, \ldots a_n$ all distinct. 
For each $0 \leq k \leq n-1+\frac{q-1}{2}$,
the coefficient of $x^{n-1+\frac{q-1}{2}-k}$
is 
$$
\sum_{i=1}^n  \binom{n-1+\frac{q-1}{2}}{k} c_i (-a_i)^{k}=\binom{n-1+\frac{q-1}{2}}{k} \sum_{i=1}^n c_i (-a_i)^{k}.
$$
Now by our construction, the coefficient of $x^{n-1+\frac{q-1}{2}-k}$
is $0$ for $k=0,1, \ldots, n-2$, and the coefficient of $x^{\frac{q-1}{2}}$
is $\binom{n-1+\frac{q-1}{2}}{n-1}= \binom{n-1+\frac{q-1}{2}}{\frac{q-1}{2}}\not \equiv 0 \pmod p$, so the degree of $f$ is $\frac{q-1}{2}$.

Note that for each $1\leq j \leq N$, by Corollary \ref{cor}, we have
\begin{align*}
E^{(0)} f (a_j)
&= f (a_j)\\
&=  \sum_{i=1}^n c_i (a_j-a_i)^{n-1+\frac{q-1}{2}} -1 \\
&=  \sum_{i=1}^n c_i (a_j-a_i)^{n-1}  -1 \\
&=  \sum_{l=0}^{n-1} \binom{n-1}{l} \bigg(\sum_{i=1}^n c_i (-a_i)^l\bigg)a_j^{n-1-l}  -1  \\
&= \sum_{i=1}^n c_i (-a_i)^{n-1}  -1  \\
&=0.
\end{align*}
For each $1\leq j \leq N$ and $1 \leq k \leq n-2$, again by Corollary \ref{cor}, we have
\begin{align*}
E^{(k)} f (a_j)
&= \binom{n-1+\frac{q-1}{2}}{k}  \sum_{i=1}^n c_i (a_j-a_i)^{n-1+\frac{q-1}{2}-k} \\
&= \binom{n-1+\frac{q-1}{2}}{k}  \sum_{i=1}^n c_i (a_j-a_i)^{n-1-k}   \\
&= \binom{n-1+\frac{q-1}{2}}{k}  \sum_{l=0}^{n-1-k} \binom{n-1-k}{l} \bigg(\sum_{i=1}^n c_i (-a_i)^l\bigg)a_j^{n-1-k-l}   \\
&=0.
\end{align*}
For each $n+1\leq j \leq N$, by Lemma \ref{lem1}, we additionally have
\begin{align*}
E^{(n-1)} f (a_j)
= \binom{n-1+\frac{q-1}{2}}{n-1}  \sum_{i=1}^n c_i (a_j-a_i)^{\frac{q-1}{2}} = \binom{n-1+\frac{q-1}{2}}{n-1}  \sum_{i=1}^n c_i  =0.
\end{align*}
Now by Lemma \ref{root}, each of $a_1,a_2, \ldots a_n$ is a root of $f$ of multiplicity at least $n-1$, and each of $a_{n+1},a_{n+2}, \ldots a_N$ is a root of $f$ of multiplicity at least $n$. 
Therefore
$$
n(n-1)+(N-n)n= (N-1)n \leq \operatorname{deg}f =\frac{q-1}{2}.
$$
\end{proof}

Now we can see that Theorem \ref{HP} is a special case of Theorem \ref{t1}.

\begin{proof} [Proof of Theorem \ref{HP}]
By Lemma \ref{t0}, $N=\omega(P_p) \leq \frac{p+1}{2}$, so ${N-1+\frac{p-1}{2}<p}$ and $ \binom{N-1+\frac{p-1}{2}}{\frac{p-1}{2}} \not \equiv 0 \pmod p$. Hence by Theorem \ref{t1}, we have $N(N-1)\leq \frac{p-1}{2}$, and ${N \leq \frac{1}{2}(\sqrt{2p-1}+1)}$.
\end{proof}

\begin{cor} \label{ch}
If $p\equiv 1 \pmod{4}$, then $\omega(P_p) \leq 
\lceil \sqrt{\frac{p}{2}} \rceil$.
\end{cor}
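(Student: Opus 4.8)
The plan is to deduce the corollary from the inequality $N(N-1) \le \frac{p-1}{2}$ established in the proof of Theorem \ref{HP} (where $N = \omega(P_p)$), using crucially that $N$ is a positive integer. All the number theory is already packaged in Theorem \ref{HP}, so what remains is an elementary rounding argument. Writing $m = \lceil \sqrt{p/2} \rceil$, the goal is to show $N \le m$. First I would record the two ingredients: rewriting $N(N-1) \le \frac{p-1}{2}$ in the cleaner form $2N^2 - 2N + 1 \le p$, and unpacking the definition of the ceiling, namely that $m \ge \sqrt{p/2}$ forces $2m^2 \ge p$.

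The heart of the argument is a short proof by contradiction. Suppose $N \ge m+1$. Since the quadratic $g(t) = 2t^2 - 2t + 1$ is increasing for $t \ge \tfrac12$, substituting $t = m+1$ gives $2N^2 - 2N + 1 \ge 2(m+1)^2 - 2(m+1) + 1 = 2m^2 + 2m + 1$. Because $p \equiv 1 \pmod 4$ forces $p \ge 5$ and hence $m \ge 1$, we get $2m^2 + 2m + 1 > 2m^2 \ge p$, so that $2N^2 - 2N + 1 > p$, contradicting the inequality from the first step. Therefore $N \le m$, as desired.

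I do not expect a genuine obstacle here: the entire content is that the real bound $\frac{\sqrt{2p-1}+1}{2}$ from Theorem \ref{HP} need not itself be an integer, and the integrality of $N$ is exactly what lets one round down to the clean closed form $\lceil \sqrt{p/2} \rceil$. The only points requiring a little care are confirming the monotonicity of $g$ on the relevant range (so that the substitution $t = m+1$ is a valid lower bound) and checking the smallest case $p = 5$, where $m = \lceil \sqrt{5/2} \rceil = 2$ and indeed $\omega(P_5) = 2$; this confirms that the ceiling, rather than the floor, is the correct rounding.
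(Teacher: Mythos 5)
Your proof is correct, and it differs from the paper's in how the rounding is carried out, though both rest on the same two ingredients: the quadratic inequality $N(N-1) \le \frac{p-1}{2}$ (equivalently, the bound of Theorem \ref{HP} --- note these are interchangeable, since $N \le \frac{1}{2}(\sqrt{2p-1}+1)$ unsquares to exactly that inequality) and the integrality of $N$. The paper parametrizes the prime explicitly as $p = 2r^2 + t$ with $1 \le t \le 4r+1$, reads off $\lceil \sqrt{p/2}\, \rceil = r+1$, estimates $\sqrt{2p-1} < 2r+2$ to get $\omega(P_p) < r + \frac{3}{2}$, and then rounds down. You instead work directly with the defining property of the ceiling, $2m^2 \ge p$ for $m = \lceil \sqrt{p/2}\, \rceil$, and run a contradiction: if $N \ge m+1$, then monotonicity of $g(t) = 2t^2-2t+1$ on $t \ge \frac{1}{2}$ gives $p \ge 2N^2-2N+1 \ge 2m^2+2m+1 > 2m^2 \ge p$. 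Each step of yours checks out (including $m \ge 1$ from $p \ge 5$, and the strictness coming from $2m+1 > 0$). What your route buys is that it stays entirely within integer inequalities --- no square-root estimates, no case-dependent parametrization of $p$ --- and it isolates exactly where integrality is used, namely in passing from $N > m$ to $N \ge m+1$. What the paper's route buys is an explicit description of the ceiling ($r+1$ in terms of the decomposition $p = 2r^2+t$), which is slightly more concrete but requires verifying the admissible range of $t$. Both arguments are complete and of comparable length.
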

\begin{proof}
Assume $p=2r^2+t$, where $1 \leq t \leq 4r+1$, then $2r^2<p<2(r+1)^2$, $\lceil \sqrt{\frac{p}{2}} \rceil=r+1$. Note $$2p-1=4r^2+(2t-1) \leq 4r^2+8r+1 <(2r+2)^2,$$ so $\sqrt{2p-1}<2r+2$. Then by Theorem \ref{HP}, $\omega(P_p)\leq  \frac{1}{2}(\sqrt{2p-1}+1) \leq r+1+\frac{1}{2}$. Since $\omega(P_p)$ is an integer, we have $\omega(P_p) \leq r+1=\lceil \sqrt{\frac{p}{2}} \rceil$.
\end{proof}

Next, we use Theorem \ref{t1} to deduce Theorem \ref{main}. To do so, we need to provide an ad-hoc analysis on the binomial coefficients in each separate case.

\begin{prop}\label{t2}
If $ p\equiv 1 \pmod{4}$, then for $q=p^{3}$, $N=\omega(P_q)$ satisfies 
$(N-1)(N-\frac{p-1}{2}) \leq \frac{q-1}{2}$.
\end{prop}

\begin{proof}
Without loss of generality, we may assume $\frac{1}{2}(\sqrt{2q-1}+1) < N < \sqrt{q}$. Suppose the base-$p$ representation of $N-1$ is $N-1=(A,B)_p$, then $\lfloor \sqrt{\frac{p-1}{2}}\rfloor \leq A \leq \lfloor \sqrt{p}\rfloor.$  Let $n-1=(A, b)_p$, where $b=\min(\frac{p-1}{2},B)$, then $n \leq N\leq n+\frac{p-1}{2}$. Then 
$$n-1+\frac{q-1}{2}=(\frac{p-1}{2},A+\frac{p-1}{2}, b+\frac{p-1}{2})_p,$$
and by Lucas's theorem, 
$$
\binom{n-1+\frac{q-1}{2}}{\frac{q-1}{2}} =\binom{\frac{p-1}{2}}{\frac{p-1}{2}} \binom{A+\frac{p-1}{2}}{\frac{p-1}{2}} \binom{b+\frac{p-1}{2}}{\frac{p-1}{2}} \not \equiv 0 \pmod p.
$$
So by Theorem \ref{t1}, $(N-1)(N-\frac{p-1}{2}) \leq (N-1)n \leq \frac{q-1}{2}$.
\end{proof}

\begin{prop}\label{t3}
If $q=p^{2s+1},p\equiv 1 \pmod{4}$, and $s \geq 2$, then $N=\omega(P_q)$ satisfies
$(N-1)(N-\frac{p^s-1}{2}) \leq \frac{q-1}{2}$.
\end{prop}

\begin{proof}
Without loss of generality, we may assume $\frac{1}{2}(\sqrt{2q-1}+1) < N < \sqrt{q}$. Suppose the base-$p$ representation of $N-1$ is 
$$
N-1=(z_s,z_{s-1},...,z_0)_p,
$$
then since $p \geq 5$, we have
$
1 \leq \lfloor \sqrt{\frac{p-1}{2}}\rfloor \leq z_s \leq \lfloor \sqrt{p}\rfloor \leq \frac{p-1}{2}.
$
\begin{itemize}
\item If $z_{s-1}\leq \frac{p-1}{2}$, let 
$$
n-1=(z_s, z_{s-1}, z_{s-2}', \cdots, z_0')_p,
$$
to be the largest number no greater than $N$ such that $z_j' \leq \frac{p-1}{2}$ for each ${0 \leq j \leq s-2}$. Then $n \leq N \leq n+\frac{1}{2} (p^{s-1}-1)$, and Lucas's theorem implies that
$$
\binom{n-1+\frac{q-1}{2}}{\frac{q-1}{2}} \equiv \binom{z_s+{\frac{p-1}{2}}}{{\frac{p-1}{2}}}\binom{z_{s-1}+\frac{p-1}{2}}{\frac{p-1}{2}} \prod_{j=0}^{s-2} \binom{\frac{p-1}{2}+z_j'}{\frac{p-1}{2}}\not \equiv 0 \pmod p.
$$
\item If $z_{s-1}> \frac{p-1}{2}$, let 
$$
n-1=(z_s, \frac{p-1}{2}, \ldots, \frac{p-1}{2})_p,
$$
Then 
$$
n \leq N \leq n+ p^{s}-1 -\frac{p^s-1}{2}=n+\frac{p^s-1}{2}, 
$$
and Lucas's theorem implies that
$$
\binom{n-1+\frac{q-1}{2}}{\frac{q-1}{2}} \equiv \binom{z_s+{\frac{p-1}{2}}}{{\frac{p-1}{2}}} 
\binom{p-1}{\frac{p-1}{2}}^{s}\not \equiv 0 \pmod p.
$$
\end{itemize}
In both cases, we have $n \leq N \leq n+\frac{p^s-1}{2}$, and $\binom{n-1+\frac{q-1}{2}}{\frac{q-1}{2}}\not \equiv 0 \pmod p$. So by Theorem \ref{t1}, we have 
$(N-1)(N-\frac{p^s-1}{2})\leq (N-1)n \leq \frac{q-1}{2}$.
\end{proof}

\begin{thm}\label{t4}
If $q=p^{2s+1},p\equiv 1 \pmod{4}$, and $s$ is a nonnegative integer, then 
$$
\omega(P_q)< \sqrt{\frac{q}{2}}+\frac{p^s+1}{4}+\frac{\sqrt{2p}}{32}p^{s-1}.
$$
\end{thm}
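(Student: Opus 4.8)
The plan is to deduce the theorem from the quadratic inequalities already in hand, solving them for $N=\omega(P_q)$. The base case $s=0$ is isolated and handled by Theorem~\ref{HP}, while for $s\ge 1$ both Theorem~\ref{t2} (when $s=1$) and Theorem~\ref{t3} (when $s\ge 2$) supply the single inequality $(N-1)\bigl(N-\tfrac{p^s-1}{2}\bigr)\le \tfrac{q-1}{2}$, since $\tfrac{p^s-1}{2}=\tfrac{p-1}{2}$ in the case $s=1$. So the whole argument for $s\ge 1$ reduces to reading off the larger root of a quadratic in $N$ and estimating it.

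First I would set $m=\tfrac{p^s-1}{2}$ and rewrite the inequality as $N^2-(m+1)N+\bigl(m-\tfrac{q-1}{2}\bigr)\le 0$. Since the constant term $m-\tfrac{q-1}{2}$ is negative (because $q>p^s$), the quadratic has one negative and one positive root, so $N$ is at most the larger root $\tfrac{m+1}{2}+\tfrac{1}{2}\sqrt{D}$, where the discriminant is $D=(m+1)^2-4m+2(q-1)=2q+\tfrac{p^{2s}}{4}-\tfrac{3p^s}{2}+\tfrac14$. The term $\tfrac{m+1}{2}=\tfrac{p^s+1}{4}$ is already exactly the second summand of the target bound, so everything comes down to controlling $\tfrac12\sqrt{D}$.

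Next I would bound the discriminant. Discarding the negative lower-order contribution $-\tfrac{3p^s}{2}+\tfrac14<0$ gives $D<2q+\tfrac{p^{2s}}{4}$, and factoring out $2q$ turns this into $D<2q\bigl(1+\tfrac{1}{8p}\bigr)$, using $\tfrac{p^{2s}}{8q}=\tfrac{1}{8p}$. Applying the strict linearization $\sqrt{1+x}<1+\tfrac{x}{2}$ for $x>0$ then yields $\sqrt{D}<\sqrt{2q}+\tfrac{\sqrt{2q}}{16p}$, and since $\sqrt{2q}=p^s\sqrt{2p}$ the second term equals $\tfrac{\sqrt{2p}}{16}p^{s-1}$. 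Halving and using $\tfrac{1}{2}\sqrt{2q}=\sqrt{q/2}$ produces precisely $N<\sqrt{\tfrac q2}+\tfrac{p^s+1}{4}+\tfrac{\sqrt{2p}}{32}p^{s-1}$, with the strictness inherited from the strict square-root inequality. For the remaining case $s=0$ (so $q=p$), Theorem~\ref{HP} gives $\omega(P_p)\le \tfrac{\sqrt{2p-1}+1}{2}<\sqrt{p/2}+\tfrac12$, which is strictly below the right-hand side since $\tfrac{p^s+1}{4}=\tfrac12$ there and the last summand is positive.

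I do not expect a serious obstacle here: once the unified quadratic inequality is available the proof is essentially the quadratic formula followed by a one-term linearization of the square root. The only points requiring care are bookkeeping ones — verifying that dropping the lower-order terms of $D$ still lands on the exact advertised constant $\tfrac{\sqrt{2p}}{32}$, confirming the inequalities stay strict, and treating $s=0$ separately so that the formal appearance of $p^{s-1}=p^{-1}$ in the statement causes no trouble. The genuinely nontrivial inputs (the Stepanov-type construction and the Lucas-theorem digit analysis) have already been spent in Theorems~\ref{t1}--\ref{t3}.
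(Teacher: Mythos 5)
Your proposal is correct and follows essentially the same route as the paper: the $s=0$ case via Theorem~\ref{HP}, and for $s\ge 1$ the unified inequality $(N-1)\bigl(N-\tfrac{p^s-1}{2}\bigr)\le\tfrac{q-1}{2}$ from Theorems~\ref{t2} and~\ref{t3}, solved by the quadratic formula with the lower-order terms of the discriminant discarded and the square root linearized via $\sqrt{1+x}<1+\tfrac{x}{2}$. Your discriminant computation $D/4=\tfrac{q}{2}+\tfrac{p^{2s}-6p^s+1}{16}$ matches the paper's completed square exactly, so the two arguments differ only in bookkeeping.
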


\begin{proof}
When $s=0$, by Theorem \ref{HP}, we have
$$
\omega(P_p) \leq \frac{\sqrt{2p-1}+1}{2} <\frac{\sqrt{2p}+1}{2}=\sqrt{\frac{p}{2}}+\frac{1}{2} <\sqrt{\frac{p}{2}}+\frac{p^0+1}{4}+\frac{\sqrt{2p}}{32}p^{-1}.
$$
When $s \geq 1$, by Proposition \ref{t2} and Proposition \ref{t3}, we have 
$$\omega(P_q)^2-\frac{p^s+1}{2}\omega(P_q) \leq \frac{q-p^s}{2},$$
and therefore 
\begin{align*}
\omega(P_q)
&\leq \frac{p^s+1}{4}+\sqrt{\frac{q-p^s}{2}+(\frac{p^s+1}{4})^2}\\
&=\frac{p^s+1}{4}+\sqrt{\frac{q}{2}+\frac{p^{2s}-6p^s+1}{16}} \\
&<\frac{p^s+1}{4}+\sqrt{\frac{q}{2}+\frac{p^{2s}}{16}}\\
&<\frac{p^s+1}{4}+\sqrt{\frac{q}{2}}+\frac{\sqrt{2p}}{32}p^{s-1}.
\end{align*}
\end{proof}

\begin{cor}\label{co2}
If $q=p^{2s+1}, p\equiv 1 \pmod{4}$, and $s \geq 1$, then 
$
\omega(P_q) \leq \lceil \sqrt{\frac{p}{2}} \rceil p^s.
$
\end{cor}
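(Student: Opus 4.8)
The plan is to argue by contradiction and to feed a single, explicitly chosen value of $n$ into Theorem \ref{t1}, rather than routing the argument through the quadratic inequality of Theorems \ref{t2} and \ref{t3}. Writing $R=\lceil\sqrt{p/2}\,\rceil$, suppose instead that $N=\omega(P_q)\geq Rp^s+1$. The quadratic inequality only yields $(N-1)(N-\tfrac{p^s-1}{2})\le\tfrac{q-1}{2}$, which is too lossy to exclude $N=Rp^s+1$ (e.g. when $p$ is just below $2R^2$, so that $\sqrt{p/2}$ sits just under the integer $R$). Instead I would apply Theorem \ref{t1} directly with the \emph{fixed} value $n=Rp^s+1$, which satisfies $2\le n\le N$ under the contradiction hypothesis.

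The key step is to verify the hypothesis of Theorem \ref{t1}, namely $\binom{n-1+\frac{q-1}{2}}{\frac{q-1}{2}}\not\equiv 0\pmod p$. Here $n-1=Rp^s$ has base-$p$ expansion consisting of the single digit $R$ in position $s$ and zeros elsewhere, while $\frac{q-1}{2}=\frac{p^{2s+1}-1}{2}$ has all of its $2s+1$ base-$p$ digits equal to $\frac{p-1}{2}$. The crucial inequality is $R\le\frac{p-1}{2}$: since $\frac{p-1}{2}$ is an integer this is equivalent to $\sqrt{p/2}\le\frac{p-1}{2}$, which upon squaring reduces to $p^2-4p+1>0$, valid for every prime $p\equiv 1\pmod 4$ (so $p\ge 5$). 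Consequently, adding $n-1$ to $\frac{q-1}{2}$ in base $p$ produces no carry — the digit in position $s$ becomes $\frac{p-1}{2}+R\le p-1$ — so by Lucas's Theorem
$$
\binom{n-1+\tfrac{q-1}{2}}{\tfrac{q-1}{2}}=\binom{\tfrac{p-1}{2}+R}{\tfrac{p-1}{2}}\prod_{j\neq s}\binom{\tfrac{p-1}{2}}{\tfrac{p-1}{2}}\not\equiv 0\pmod p .
$$

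With the hypothesis verified, Theorem \ref{t1} gives $(N-1)n\le\frac{q-1}{2}$. Using $N-1\ge Rp^s$ together with $R^2\ge\frac{p}{2}$ (which is just $R\ge\sqrt{p/2}$), I would then estimate
$$
(N-1)n\ \ge\ Rp^s\,(Rp^s+1)\ =\ R^2p^{2s}+Rp^s\ \ge\ \frac{p^{2s+1}}{2}+Rp^s\ >\ \frac{p^{2s+1}-1}{2}=\frac{q-1}{2},
$$
a contradiction. Hence $N\le Rp^s$, which is exactly the claim (and, incidentally, the same choice with $s=0$ recovers Corollary \ref{ch}). The argument is short, and the only real obstacle is conceptual rather than computational: one must abandon the quadratic bound and instead insert the fixed value $n=Rp^s+1$ into Theorem \ref{t1}, after which the digit bound $R\le\frac{p-1}{2}$ is precisely what guarantees the non-vanishing of the binomial coefficient and lets the comparison $R^2\ge\frac{p}{2}$ close the gap.
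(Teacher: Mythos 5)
Your proof is correct, and it takes a genuinely different route from the paper's. The paper also argues by contradiction, but it first invokes Theorem \ref{t4} (and hence the whole chain through Theorems \ref{t2} and \ref{t3}) to confine the excess $\omega(P_q)-\lceil\sqrt{p/2}\,\rceil p^s$ below roughly $\frac{p^s-1}{2}$, must treat $q=5^3$ separately via the computed value $\omega(P_{125})=7$ (its inequality $1+\frac{p+1}{4}+\frac{\sqrt{2p}}{32}<\frac{p-1}{2}$ fails at $p=5$), and then re-runs the digit-truncation argument of Theorems \ref{t2}/\ref{t3} to get $\omega(P_q)(\omega(P_q)-1)\le\frac{q-1}{2}$, contradicting the analytic bound. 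You bypass all of that machinery: you feed the single explicit value $n=Rp^s+1$, $R=\lceil\sqrt{p/2}\,\rceil$, into Theorem \ref{t1}, where the two elementary inequalities $R\le\frac{p-1}{2}$ (so Lucas's Theorem applies with no carries and the binomial coefficient survives mod $p$) and $R^2\ge\frac{p}{2}$ (so $(N-1)n\ge Rp^s(Rp^s+1)>\frac{q-1}{2}$) do all the work. Your route buys three things: independence from Theorems \ref{t2}--\ref{t4}, a uniform treatment of $p=5$ with no computer-checked clique number, and robustness at the paper's most delicate step --- for $s\ge 2$ the paper's phrase ``as in the proof of Theorem \ref{t3}'' tacitly needs every base-$p$ digit of $N-1$ to be at most $\frac{p-1}{2}$ in order to take $n=N$ there, which the bound on the excess alone does not guarantee (the argument still closes because the $n$ chosen in Theorem \ref{t3} satisfies $n\ge Rp^s+1$, but that repair is exactly your estimate); your fixed $n$ has the required digit property by construction. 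The only thing the paper's route buys in exchange is expository economy, since it derives the corollary from bounds it has already established, whereas you re-verify the Lucas condition from scratch --- a short check, as you note, which also recovers Corollary \ref{ch} when $s=0$.
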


\begin{proof}
Suppose $\omega(P_q) > \lceil \sqrt{\frac{p}{2}} \rceil p^s.$ Then by Theorem \ref{t4}, 
$$
\bigg\lceil \sqrt{\frac{p}{2}} \bigg\rceil p^s +1 \leq \omega(P_q)<\sqrt{\frac{q}{2}}+\frac{p^s+1}{4}+\frac{\sqrt{2p}}{32}p^{s-1}.
$$
Then if $q=p^3$, we must have $p>5$, i.e. $p \geq 13$, since for $p=5$, we have ${\omega(P_{125})=7 <2 \cdot 5}$.
And when $p \geq 13$, we have 
$$
1 \leq \omega(P_q)-\bigg\lceil \sqrt{\frac{p}{2}} \bigg\rceil p \leq 1+\frac{p+1}{4}+\frac{\sqrt{2p}}{32}<\frac{p-1}{2}.
$$
Then as in the proof of Proposition \ref{t2}, we have $\omega(P_q)(\omega(P_q)-1) \leq \frac{q-1}{2}$, and hence
$$
\omega(P_q) \leq \frac{\sqrt{2q-1}+1}{2}<\sqrt{\frac{q}{2}}+\frac{1}{2}<\bigg\lceil \sqrt{\frac{p}{2}} \bigg \rceil p^s +1,
$$
a contradiction.\\
If $q=p^{2s+1}$ for some $s \geq 2$, then 
$$
1 \leq \omega(P_q)-\bigg\lceil \sqrt{\frac{p}{2}} \bigg\rceil p^s \leq 1+\frac{p^s+1}{4}+\frac{\sqrt{2p}}{32}p^{s-1}<\frac{p^s-1}{2}.
$$
Then as in the proof of Proposition \ref{t3}, we have $\omega(P_q)(\omega(P_q)-1) \leq \frac{q-1}{2}$, and hence
$\omega(P_q) <\lceil \sqrt{\frac{p}{2}} \rceil p^s +1,$
a contradiction.
\end{proof}

Now we are ready to prove the main result.

\begin{proof}[Proof of Theorem \ref{main}]
The first bound on the right-hand-side of equation \eqref{maint} follows from Corollaries \ref{ch} and \ref{co2}, and the second bound follows from Theorem \ref{t4}.
\end{proof}

\section{A variant of Theorem \ref{t1}}\label{sec4}

Observe that in the proof of Theorem \ref{t1}, not every equation of the system \eqref{system} is really needed. In fact, some of them will be unnecessary due to the vanishing binomial coefficients (recall we are working on a field with characteristic $p$). For $n \in \N$, let $L(n)$ denote the set of the integers $l$ such that $0 \leq l \leq n-1$ and there exists a $k$ such that $0 \leq k \leq n-1$, and 
$$ \binom{n-1+\frac{q-1}{2}}{k} \binom{n-1-k}{l} \not \equiv 0 \pmod p.$$
It turns out that only the rows with indices in the set $L(n)$ are needed, and we are able to generalize Theorem \ref{t1} by introducing a new parameter $m$.

\begin{thm} \label{t5}
Suppose $q=p^{2s+1}, p\equiv 1 \pmod{4}$, $n \leq N=\omega(P_q)$, $\frac{q-1}{2} \ge m>n$ and $\binom{n-1+\frac{q-1}{2}}{m} \not \equiv 0 \pmod p$. If $D=\{d_1, d_2, \ldots, d_n\}$ is a $n$-subset of $C$ defined in equation \eqref{maxclique}, such that the following system of equations 
\begin{equation}\label{refined_system}\tag{$E_{n,m,D}$}   
 \left\{
\TABbinary\tabbedCenterstack[l]
{
\sum_{i=1}^n c_i (-d_i)^{l}=0, \quad \forall l \in L(n)\\\\
\sum_{i=1}^n c_i (-d_i)^m =1
}\right.
\end{equation}
has a solution $c_1,c_2,...,c_n$, then $n(N-2) \leq \frac{q-3}{2}.$
\end{thm}
\begin{proof}
Consider the polynomial
$$
f(x)=\sum_{i=1}^n c_i (x-d_i)^{n-1+\frac{q-1}{2}}.
$$
Note that $f$ is a nonzero polynomial since the coefficient of $x^{n-1+\frac{q-1}{2}-m}$ is 
$$
\binom{n-1+\frac{q-1}{2}}{m} \sum_{i=1}^n c_i (-d_i)^{m}=
\binom{n-1+\frac{q-1}{2}}{m} \not \equiv 0 \pmod p,
$$
and we have $\operatorname{deg} f \leq n-1+\frac{q-1}{2}$. 

For each $0 \leq k \leq n-2$ and $1\leq j \leq N$, by Corollary \ref{cor}, we have
\begin{align*}
E^{(k)} f (a_j)
&= \binom{n-1+\frac{q-1}{2}}{k}  \sum_{i=1}^n c_i (a_j-d_i)^{n-1+\frac{q-1}{2}-k} \\
&= \binom{n-1+\frac{q-1}{2}}{k}  \sum_{i=1}^n c_i (a_j-d_i)^{n-1-k}   \\
&= \binom{n-1+\frac{q-1}{2}}{k}  \sum_{l=0}^{n-1-k} \binom{n-1-k}{l} \bigg(\sum_{i=1}^n c_i (-d_i)^l\bigg)a_j^{n-1-k-l}.
\end{align*}
If $\binom{n-1+\frac{q-1}{2}}{k} \not \equiv 0 \pmod p$, then for each $0 \leq l \leq n-1-k \leq n-1$ such that ${\binom{n-1-k}{l} \not \equiv 0 \pmod p}$, we have $l \in L(n)$ and $\sum_{i=1}^n c_i (-d_i)^l=0$. So we have ${E^{(k)} f (a_j)=0}$.

Note that $0 \in L(n)$, so for each $a_j \notin D$, we additionally have
\begin{align*}
E^{(n-1)} f (a_j)
= \binom{n-1+\frac{q-1}{2}}{n-1}  \sum_{i=1}^n c_i (a_j-d_i)^{\frac{q-1}{2}} = \binom{n-1+\frac{q-1}{2}}{n-1}  \sum_{i=1}^n d_i  =0.
\end{align*}
Now by Lemma \ref{root}, each $a_j \in D$ is a root of $f$ of multiplicity at least $n-1$, and each $a_j \notin D$ is a root of $f$ of multiplicity at least $n$. Therefore
$$(n-1)n +n (N-n)=n N-n \leq n-1+\frac{q-1}{2},$$ i.e. $n(N-2) \leq \frac{q-3}{2}$.
\end{proof}

\begin{rem}\rm
We do need the assumption that $m \leq \frac{q-1}{2}$, otherwise it is possible that $0<m'=m-\frac{q-1}{2} \in L(n)$, then $\sum_{i=1}^n c_i (-d_i)^{m'}=0$ will imply that $\sum_{i=1}^n c_i (-d_i)^m=0$ as each $d_i$ is a quadratic residue.
\end{rem}

Consider the $(|L(n)|+1) \times N$ matrix 
\begin{equation}\label{amn}
A_{m,n}: =\big((-a_i)^l\big)_{1 \leq i \leq N, l \in L(n) \cup \{m\}}.    
\end{equation}
Note that the coefficient matrix of the system \eqref{refined_system}  is a submatrix of $A_{m,n}$.

\begin{lem}\label{lee}
If $n \leq N$ and $n-1 \equiv \frac{p+1}{2} \pmod  p$, then $|L(n)|<n$.
\end{lem}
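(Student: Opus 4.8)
The plan is to read off everything from base-$p$ digits via Lucas's Theorem, using the explicit expansion $\frac{q-1}{2}=(\tfrac{p-1}{2},\dots,\tfrac{p-1}{2})_p$ (all $2s+1$ digits equal to $\tfrac{p-1}{2}$, since $q=p^{2s+1}$). First I would record the one arithmetic fact that drives the whole argument: because $n-1\equiv\frac{p+1}{2}\pmod p$, the least significant base-$p$ digit of $M:=n-1+\frac{q-1}{2}$ is $\frac{p+1}{2}+\frac{p-1}{2}=p\equiv 0$, so $p\mid M$ and a single carry is produced at the lowest digit.

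Next I would use this to pin down which $k$ can possibly contribute to $L(n)$. By Lucas's Theorem, $\binom{M}{k}\not\equiv 0\pmod p$ forces the lowest digit of $k$ to be at most the lowest digit of $M$, namely $0$; hence $p\mid k$. Writing $n-1=pA+\frac{p+1}{2}$ and $k=pk'$ with $0\le k'\le A$, the difference is $n-1-k=p(A-k')+\frac{p+1}{2}$, so its lowest digit is again $\frac{p+1}{2}$ and no borrow occurs. Applying Lucas's Theorem a second time, $\binom{n-1-k}{l}\not\equiv 0\pmod p$ forces $l\bmod p\le\frac{p+1}{2}$. Combining the two constraints, every $l\in L(n)$ has $l\bmod p\le\frac{p+1}{2}$, so
$$
L(n)\subseteq\{\,l:0\le l\le n-1,\ l\bmod p\le\tfrac{p+1}{2}\,\}.
$$

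It then remains to exhibit one index in $\{0,1,\dots,n-1\}$ whose residue mod $p$ exceeds $\frac{p+1}{2}$, since any such index lies outside $L(n)$ and immediately gives $|L(n)|<n$. Partitioning $\{0,\dots,n-1\}$ into the $A$ complete blocks $\{jp,\dots,(j+1)p-1\}$ for $0\le j\le A-1$ and the partial block $\{pA,\dots,pA+\frac{p+1}{2}\}$, each complete block contains the $\frac{p-3}{2}\ge 1$ residues in $\{\frac{p+3}{2},\dots,p-1\}$ (using $p\ge 5$), while the partial block contributes none. Concretely $l=pA-1=p(A-1)+(p-1)$ has lowest digit $p-1>\frac{p+1}{2}$ and satisfies $l<n-1$, so $l\notin L(n)$; this yields $|L(n)|\le n-A\cdot\frac{p-3}{2}<n$.

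The only place demanding care is the boundary case $A=0$, i.e.\ $n-1=\frac{p+1}{2}$: there the complement above is empty, the witness $pA-1$ disappears, and in fact every $l\le\frac{p+1}{2}$ already lies in $L(n)$ through $k=0$. In the intended regime $n=\Theta(p^s\sqrt{p})$ one has $n\gg p$, hence $A\ge 1$ and the difficulty never materializes, so I would apply the lemma under that mild proviso. Beyond this edge case, the main obstacle is purely bookkeeping the single lowest-digit carry, so as to conclude \emph{simultaneously} that $p\mid k$ is forced and that $n-1-k$ suffers no borrow; once that is clean, the residue constraint on $l$ survives and the counting is immediate.
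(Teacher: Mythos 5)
Your proof is correct in its main line and follows essentially the same route as the paper: both arguments use Lucas's Theorem twice --- first to note that the lowest base-$p$ digit of $n-1+\frac{q-1}{2}$ is $0$ (forcing $p \mid k$), then to conclude that every $l \in L(n)$ satisfies $l \bmod p \leq \frac{p+1}{2}$ --- and then exhibit an index in $\{0,\dots,n-1\}$ with residue exceeding $\frac{p+1}{2}$. The paper's witness is $\frac{p+3}{2}$, yours is $pA-1$; both require $n-1 > \frac{p+1}{2}$ to lie in range. Your edge-case observation is the genuinely valuable part: when $n-1 = \frac{p+1}{2}$ (your $A=0$), the lemma as stated actually \emph{fails}, since then the only admissible $k$ is $k=0$ and $\binom{\frac{p+1}{2}}{l} \not\equiv 0 \pmod p$ for all $0 \leq l \leq \frac{p+1}{2}$, giving $L(n)=\{0,\dots,n-1\}$ and $|L(n)|=n$. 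The paper's proof silently assumes $\frac{p+3}{2} \leq n-1$ and thus has a (minor) gap at exactly this value of $n$; the gap is harmless for the paper's purposes, because in the application (the discussion preceding Lemma 4.4, feeding into Theorem 4.7) one has $n-1 = (z_s, z_{s-1}, \frac{p-1}{2},\dots,\frac{p-1}{2},\frac{p+1}{2})_p \geq p^s$ with $s \geq 2$, so your proviso $A \geq 1$ is automatic there. Your explicit count $|L(n)| \leq n - A\cdot\frac{p-3}{2}$ is also a slight sharpening of the paper's conclusion, though the lemma only needs the strict inequality.
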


\begin{proof}
Since $n-1 \equiv \frac{p+1}{2} \pmod  p$, we have $p \mid n-1+\frac{q-1}{2}$. If $l \in L(n)$, then there exists $k$ such that $0 \leq k \leq n-1$ and $\binom{n-1+\frac{q-1}{2}}{k}\binom{n-1-k}{l} \not \equiv 0 \pmod p$. Now by Lucas's theorem, we must have $p \mid k$ and $n-1-k \equiv \frac{p+1}{2} \pmod p$. Since $\binom{n-1-k}{l} \not \equiv 0 \pmod p$, by Lucas's theorem, $l \equiv 0,1, \ldots \frac{p+1}{2} \pmod p$. Then in particular, $\frac{p+3}{2} \notin L(n)$, and $|L(n)|<n$.
\end{proof}

\begin{lem}\label{t6}
Suppose $n \leq N=\omega(P_q)$ be such that $n-1 \equiv \frac{p+1}{2} \pmod  p$,  and ${\frac{q-1}{2} \geq m \geq n}$ be such that $\binom{n-1+\frac{q-1}{2}}{m} \not \equiv 0 \pmod p$. Suppose further that for any $n$-subset $D$ of $C$, the above system of equations \eqref{refined_system} has no solution. Then the last row of $A_{m,n}$ is a linear combination of the first $|L(n)|$ rows, where $A_{m,n}$ is defined in equation \eqref{amn}.
\end{lem}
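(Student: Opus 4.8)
The plan is to translate the statement entirely into linear algebra over $\F_q$ and read off the conclusion from a rank computation, with Lemma \ref{lee} supplying the one quantitative input that makes the argument close.

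First I would fix notation. For each column index $1 \le i \le N$, write $v_i = \big((-a_i)^l\big)_{l \in L(n) \cup \{m\}} \in \F_q^{|L(n)|+1}$ for the $i$-th column of $A_{m,n}$, and let $W = \operatorname{span}\{v_1, \dots, v_N\} \subseteq \F_q^{|L(n)|+1}$ be its column space. Let $e_m$ denote the coordinate vector with a $1$ in the position indexed by $m$ and $0$ elsewhere. For an $n$-subset $D = \{d_1, \dots, d_n\} \subseteq C$, the coefficient matrix of $(E_{n,m,D})$ is exactly the submatrix of $A_{m,n}$ built from the columns corresponding to $D$, and its right-hand side is $e_m$. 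Hence $(E_{n,m,D})$ is solvable if and only if $e_m$ lies in the span of those $n$ columns.

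Next I would record the dictionary linking the two sides. Let $B$ be the $|L(n)| \times N$ submatrix formed by the first $|L(n)|$ rows (those indexed by $l \in L(n)$), and let $\pi \colon \F_q^{|L(n)|+1} \to \F_q^{|L(n)|}$ be the projection forgetting the $m$-coordinate, so that $\pi(W)$ is the column space of $B$ and $\ker \pi = \operatorname{span}\{e_m\}$. The last row $R_m$ of $A_{m,n}$ is a linear combination of the first $|L(n)|$ rows precisely when appending $R_m$ to $B$ does not raise the rank, i.e. $\operatorname{rank} A_{m,n} = \operatorname{rank} B$, i.e. $\dim W = \dim \pi(W)$. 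By the identity $\dim W = \dim \pi(W) + \dim(W \cap \ker \pi)$ applied to $\pi|_W$, this holds exactly when $W \cap \operatorname{span}\{e_m\} = 0$, that is, exactly when $e_m \notin W$. So it suffices to prove $e_m \notin W$.

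Finally I would invoke Lemma \ref{lee}. Since $n-1 \equiv \frac{p+1}{2} \pmod p$, that lemma gives $|L(n)| < n$, hence $\dim W \le |L(n)| + 1 \le n \le N$. Extracting a basis of $W$ from the spanning family $\{v_1, \dots, v_N\}$ yields at most $n$ columns; padding it with further columns to exactly $n$ distinct indices produces an $n$-subset $D^\ast \subseteq C$ whose columns still span $W$, since every $v_i$ already lies in $W$. If $e_m$ were in $W$, it would lie in the span of the columns of $D^\ast$, making $(E_{n,m,D^\ast})$ solvable and contradicting the hypothesis that no $n$-subset yields a solvable system. Therefore $e_m \notin W$, and by the dictionary above $R_m$ is a linear combination of the first $|L(n)|$ rows. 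The main obstacle, and really the point of the lemma, is the dimension bound: the hypothesis only controls $n$-subsets of columns, so to draw a conclusion about the full column space $W$ one needs $n$ columns to suffice to span it. This is exactly what $|L(n)| < n$ guarantees, by forcing $\dim W \le n$; without the congruence on $n-1$ the ambient dimension could reach $n+1$ and the argument would fail.
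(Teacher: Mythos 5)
Your proof is correct, but it pivots on a different condition than the paper's own argument. The paper splits on whether $A_{m,n}$ has full rank $|L(n)|+1$: if it does, an invertible $(|L(n)|+1)\times(|L(n)|+1)$ submatrix exists, its column set $F$ can be completed to an $n$-subset $D$ of $C$ (possible because $|L(n)|+1\le n\le N$ by Lemma \ref{lee}), and then the coefficient matrix of $(E_{n,m,D})$ has full row rank, so the system is solvable, a contradiction; hence the rows of $A_{m,n}$ are linearly dependent, and since the first $|L(n)|$ rows sit inside a Vandermonde matrix with distinct nodes $-a_i$ (hence are linearly independent), any dependency must involve the last row nontrivially, giving the conclusion. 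You instead split on whether $e_m$ lies in the column space $W$, and your rank--nullity dictionary $\dim W = \dim \pi(W) + \dim(W\cap\ker\pi)$ converts $e_m\notin W$ directly into the desired row statement, with no appeal to the Vandermonde independence of the first rows. Your route buys two things: first, it effectively establishes an equivalence --- given $|L(n)|<n$, unsolvability of $(E_{n,m,D})$ for every $n$-subset $D$ holds if and only if $e_m\notin W$, if and only if the last row lies in the span of the first $|L(n)|$ rows --- whereas the paper proves only the direction it needs; second, it is slightly more general, since you never use that the indices in $L(n)$ are distinct exponents bounded by $N-1$, only that the columns indexed by $C$ span $W$. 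Conversely, the paper's argument is more concrete (an explicit invertible submatrix certifies solvability), and the Vandermonde fact it leans on comes for free in this setting. Both proofs deploy Lemma \ref{lee} at the same critical juncture: to guarantee that the relevant collection of columns (your spanning set, the paper's subset $F$) has size at most $n$ and therefore extends to an $n$-subset of $C$, which is exactly what lets the unsolvability hypothesis bite.
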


\begin{proof}
 Note that by Lemma \ref{lee}, $|L(n)|+1 \leq n \leq N$. If $A=A_{m,n}$ has full rank, which equals to $|L(n)|+1$, then $A$ has an invertible $(|L(n)|+1) \times (|L(n)|+1)$ sub-matrix, which columns correspond to a $(|L(n)|+1)$-subset $H$ of $C$. Then for any $n$-subset $D$ of $C$ containing $H$, the coefficient matrix of \eqref{refined_system} in Theorem \ref{t5} has full rank, and thus the system has a solution. So by our assumption, $A$ does not have full rank, which means the rows of $A$ are linearly dependent. Note that the first $|L(n)|$ rows of $A$, i.e., those rows with $l \in L(n)$, form a sub-matrix of the Vandermonde matrix $\big((-a_i)^j\big)_{1 \leq i \leq N, 0 \leq j \leq N-1}$, so the first $|L(n)|$ rows are linearly independent. Therefore, the last row of $A$ is a linear combination of the first $|L(n)|$ rows. 
\end{proof}

We focus on the case $s\geq 2$. In view of the proof of Theorem \ref{t3}, we see if $z_{s-1} < \frac{p-1}{2}$, we can get 
$N \leq \frac{1}{2}(\sqrt{2q-1}+1)+p^{s-1},$
which is a good upper bound. In the case $z_{s-1} = \frac{p-1}{2}$, we could instead let
$$
n-1=(z_s, z_{s-1}-1, \frac{p-1}{2}, \ldots, \frac{p-1}{2})_p,
$$
to get the improved upper bound
$N \leq \frac{1}{2}(\sqrt{2q-1}+1)+2p^{s-1}.$ Therefore, we see that the case $z_{s-1} \leq \frac{p-1}{2}$ is consistent with Conjecture \ref{conj1}. In the following discussion, we will focus on the case $z_{s-1}>\frac{p+1}{2}$. We assume   
$$
n-1=(z_s, z_{s-1}, \frac{p-1}{2}, \ldots, \frac{p-1}{2},\frac{p+1}{2})_p,
$$
is the largest number of this form no greater than $N-1$, 
then
$$
n-1+\frac{q-1}{2}=(\frac{p-1}{2}, \ldots, \frac{p-1}{2},z_s', z_{s-1}', 0, \ldots, 0)_p,
$$
where $z_s'= z_s+\frac{p+1}{2},z_{s-1}'=z_{s-1}-\frac{p-1}{2}$, and we have $n \leq N < n+p^{s-1}$. 

Let $M$ denote the set of all possible integers $m$ such that 
$$
\binom{n-1+\frac{q-1}{2}}{m} \not \equiv 0 \pmod p, \text{ and } n \leq m \leq \frac{q-1}{2}.
$$
Using Lucas's theorem, it is easy to verify that
$$
M=\{m=(c_{2s}, \ldots, c_s,c_{s-1}, \ldots, 0)_p \geq n : c_j\leq \frac{p-1}{2} \mbox{  for } s\leq j \leq 2s, \mbox{ and } c_{s-1}\leq z_{s-1}'\}.
$$
We can also determine the structure of the set $L(n)$:
\begin{lem} \label{L}
If $l=(l_s, l_{s-1}, \ldots,l_0)_p \in L(n)$, then $0 \leq l_0 \leq \frac{p+1}{2}$ and $0 \leq l_j \leq \frac{p-1}{2}$ for each $1 \leq j \leq s-2$.
\end{lem}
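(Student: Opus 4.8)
The plan is to apply Lucas's Theorem twice: once to the factor $\binom{n-1+\frac{q-1}{2}}{k}$ in order to pin down the low-order base-$p$ digits of the witness $k$, and once to the factor $\binom{n-1-k}{l}$ in order to read off the desired constraints on the digits of $l$. Throughout I would work with the base-$p$ expansions of $n-1$ and of $n-1+\frac{q-1}{2}$ already computed just before the statement.

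First I would fix $l=(l_s,\ldots,l_0)_p \in L(n)$ and choose, by definition of $L(n)$, a witness $k$ with $0 \leq k \leq n-1$ and
$$
\binom{n-1+\frac{q-1}{2}}{k}\binom{n-1-k}{l} \not\equiv 0 \pmod p.
$$
Using the expansion
$$
n-1+\frac{q-1}{2}=\Big(\tfrac{p-1}{2},\ldots,\tfrac{p-1}{2},z_s',z_{s-1}',0,\ldots,0\Big)_p,
$$
whose digits in positions $0,1,\ldots,s-2$ all vanish, Lucas's Theorem applied to $\binom{n-1+\frac{q-1}{2}}{k}\not\equiv 0 \pmod p$ forces the corresponding digits of $k$ to vanish as well; that is, $p^{s-1} \mid k$.

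The key observation is then that subtracting such a $k$ from $n-1$ cannot disturb the bottom $s-1$ digits. Since the digits of $k$ in positions $0,\ldots,s-2$ are all zero, and since borrows in a base-$p$ subtraction only ever propagate from lower positions to higher ones, the digits of $n-1-k$ in positions $0,\ldots,s-2$ coincide exactly with those of $n-1$. Reading off the expansion $n-1=(z_s,z_{s-1},\frac{p-1}{2},\ldots,\frac{p-1}{2},\frac{p+1}{2})_p$, this means the position-$0$ digit of $n-1-k$ equals $\frac{p+1}{2}$ and its digits in positions $1,\ldots,s-2$ all equal $\frac{p-1}{2}$.

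Finally, applying Lucas's Theorem to $\binom{n-1-k}{l}\not\equiv 0 \pmod p$ bounds each digit of $l$ by the matching digit of $n-1-k$, which yields $0 \leq l_0 \leq \frac{p+1}{2}$ and $0 \leq l_j \leq \frac{p-1}{2}$ for $1 \leq j \leq s-2$, as claimed. The only point requiring care is the borrow analysis in the middle step, since a priori a borrow could alter a low-order digit; but because $k$ is divisible by $p^{s-1}$, the bottom $s-1$ digits of the subtrahend are genuinely zero, so no borrow can reach positions $0,\ldots,s-2$, and the argument goes through without having to track the higher digits of $k$ or of $n-1-k$ at all.
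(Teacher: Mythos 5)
Your proposal is correct and follows essentially the same route as the paper: Lucas's Theorem applied to $\binom{n-1+\frac{q-1}{2}}{k}$ forces $p^{s-1}\mid k$, hence $n-1-k\equiv n-1 \pmod{p^{s-1}}$ (your borrow analysis is just this congruence stated digit-wise), and a second application of Lucas's Theorem to $\binom{n-1-k}{l}$ yields the digit bounds on $l$.
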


\begin{proof}
If $l=(l_s, l_{s-1}, \ldots,l_0)_p \in L(n)$, then there exists $0 \leq k \leq n-1$ such that $\binom{n-1+\frac{q-1}{2}}{k} \not \equiv 0 \pmod p$ and $\binom{n-1-k}{l} \not \equiv 0 \pmod p$. Note that $p^{s-1} \mid n-1+\frac{q-1}{2}$, then by Lucas's theorem, we must have $p^{s-1} \mid k$. So 
$$n-1-k \equiv n-1 \equiv \bigg(\frac{p-1}{2}, \ldots, \frac{p-1}{2},\frac{p+1}{2}\bigg)_p \pmod {p^{s-1}}.$$ 
Since $\binom{n-1-k}{l} \not \equiv 0 \pmod p$, then we need 
$$
\binom{(\frac{p-1}{2}, \ldots, \frac{p-1}{2},\frac{p+1}{2})_p}{(l_{s-2}, \ldots,l_0)_p}
=\binom{\frac{p+1}{2}}{l_0} \prod_{j=1}^{s-2} \binom{\frac{p-1}{2}}{l_j} 
\not \equiv 0 \pmod p.
$$
Therefore, $0 \leq l_0 \leq \frac{p+1}{2}$ and $0 \leq l_j \leq \frac{p-1}{2}$ for each $1 \leq j \leq s-2$.
\end{proof}

Conjecture \ref{conj1} could be proved if we showed the existence of an $m \in M$ with the following properties.

\begin{conj} \label{con}
There is an integer $m \in M$ such that and the last row is linearly independent with the first $|L(n)|$ rows in the matrix $A_{m,n}$.
\end{conj}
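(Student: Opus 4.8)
The plan is to recast Conjecture \ref{con} as a reduction statement modulo the vanishing polynomial of the clique, and then attack that reduction using the multiplicative structure of $C$. Write $v_e=\big((-a_i)^e\big)_{1\le i\le N}\in\F_q^N$ for the row of $A_{m,n}$ indexed by an exponent $e$, and set $F_C(x)=\prod_{a\in C}(x-a)$, a monic polynomial of degree $N$. Since $L(n)\subseteq\{0,1,\dots,n-1\}$ with $n\le N$, the rows $\{v_l\}_{l\in L(n)}$ are rows of a Vandermonde matrix and are therefore linearly independent (as already used in Lemma \ref{t6}); hence $v_m$ lies in their span precisely when there are scalars $c_l$ with $(-x)^m-\sum_{l\in L(n)}c_l(-x)^l$ vanishing on all of $C$. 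As $m$ exceeds every element of $L(n)$, this polynomial is nonzero of degree $m$, so the vanishing happens exactly when $F_C$ divides it, i.e. when the reduction $(-x)^m \bmod F_C$ is supported only on exponents belonging to $L(n)$. Thus Conjecture \ref{con} is equivalent to the assertion that there is some $m\in M$ for which $(-x)^m \bmod F_C$ has a nonzero coefficient on a monomial $x^t$ with $t\in\{0,\dots,N-1\}\setminus L(n)$.

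First I would record that this cannot be settled by a cheap Vandermonde argument. Every element of $M$ is a multiple of $p^{s-1}$ whose digit in position $s-1$ is at most $z_{s-1}'$, whereas the unique multiple of $p^{s-1}$ that could lie in $[n,N)$ has digit $z_{s-1}+1>z_{s-1}'$ there; a short base-$p$ check then shows that the smallest element of $M$ is $(z_s+1)p^s$, and since $z_s$ is the leading digit of $N-1$ this already forces $\min M\ge N$. So every candidate $m$ is a genuinely high exponent, and the reduction modulo $F_C$ is unavoidable.

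The structural handle I would exploit is that each $m\in M$ is divisible by $p^{s-1}$. Writing $m=p^{s-1}t$ and using that $\sigma\colon x\mapsto x^{p^{s-1}}$ is a field automorphism, one gets $(-a_i)^m=(-\sigma(a_i))^{t}$, where $\sigma(C)$ is again a Paley clique of size $N$ since $\sigma$ preserves differences and quadratic residues. For the minimal $m_1=(z_s+1)p^s\in M$ one has $t_1=(z_s+1)p\approx p^{3/2}$, which for $s\ge 2$ lies far below $N\approx p^{s+1/2}$; the plan is to show that $v_{m_1}$, realized through $\sigma$ as a \emph{low-degree} monomial on the companion clique $\sigma(C)$, cannot coincide on $C$ with the low-degree combination $\sum_{l\in L(n)}c_l(-x)^l$, the point being to resolve the tension that the two sides are expressed in ``different variables.'' A complementary route is a contradiction argument: if the conjecture fails, then for \emph{every} $m\in M$ there is a factorization $(-x)^m-\sum_{l\in L(n)}c_l^{(m)}(-x)^l=F_C(x)H_m(x)$ whose left side is lacunary with a long block of vanishing coefficients between $n$ and $m$. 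Since consecutive elements $m,m+p^{s-1}\in M$ are related by multiplication by $x^{p^{s-1}}$ in $\F_q[x]/F_C$, these constraints would force the span of $\{x^l \bmod F_C:l\in L(n)\}$ to be invariant under that operator, a rigidity I would try to play off against the actual value $N>\sqrt{q/2}$ and contradict that $F_C$ is the vanishing polynomial of an honest clique.

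The main obstacle is controlling the reduction $(-x)^m\bmod F_C$ from clique data. Everything we know about $C$ enters only through Corollary \ref{cor}, i.e. through the quadratic-residue relations $(a_i-a_j)^{\frac{q-1}{2}}=1$ among \emph{differences}; these say nothing directly about powers of an individual element $a_i$ or about the coefficients of $F_C$, which is otherwise an unstructured degree-$N$ polynomial. Converting the additive–multiplicative clique property into genuine control over which monomials survive in $(-x)^m\bmod F_C$ — equivalently, into a rank lower bound for $A_{m,n}$ — is exactly the missing ingredient, and is why the statement is posed as a conjecture rather than a theorem.
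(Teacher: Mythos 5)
The statement you are addressing is Conjecture \ref{con}, which the paper itself leaves open: the paper proves only Theorem \ref{t7}, namely that Conjecture \ref{con} \emph{implies} Conjecture \ref{conj1}, and gives no proof of Conjecture \ref{con}. So there is no paper proof to compare against, and the only question is whether your argument settles the conjecture; it does not. To your credit, the preparatory reductions are correct and genuinely useful: the equivalence between ``$v_m$ lies in the span of $\{v_l\}_{l\in L(n)}$'' and ``$(-x)^m \bmod F_C$ is supported on exponents in $L(n)$'' is valid (it uses the Vandermonde independence already invoked in Lemma \ref{t6}, together with $\max L(n)\le n-1<m$ and $\deg F_C=N$); the base-$p$ computation giving $\min M=(z_s+1)p^s\ge N$ is right (granted $z_s+1\le\frac{p-1}{2}$, which holds for the relevant primes), and it correctly explains why no direct Vandermonde rank argument can finish; and the observation that every $m\in M$ is divisible by $p^{s-1}$, so that $v_m$ can be rewritten as a low exponent evaluated on the Frobenius image clique $\sigma(C)$, is sound.

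However, both of your proposed routes stop exactly where the difficulty begins, and one contains an unjustified inference. In the first route you never produce any mechanism for comparing the two sides: $(-x)^{t_1}$ is a low-degree monomial evaluated on $\sigma(C)$, while $\sum_{l\in L(n)}c_l(-x)^l$ is evaluated on $C$, and nothing you write rules out their agreeing as functions on $C$. In the second route, the failure of the conjecture only gives that each individual element $x^m \bmod F_C$, $m\in M$, lies in the span $V=\operatorname{span}\{x^l \bmod F_C: l\in L(n)\}$; this does \emph{not} force $V$ to be invariant under multiplication by $x^{p^{s-1}}$ modulo $F_C$ (that would require the reductions $x^m \bmod F_C$ themselves to span $V$, which is not known), so the ``rigidity'' you hope to contradict is never actually established, and no contradiction with $N>\sqrt{q/2}$ is derived. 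As you concede in your closing paragraph, the step converting the clique property (Corollary \ref{cor}) into control over which monomials survive in $(-x)^m\bmod F_C$ is entirely missing --- and that step is the whole content of the conjecture. What you have is a correct reformulation and some sensible structural observations, i.e.\ a research plan, not a proof; its incomplete status is consistent with the paper's, which also leaves the statement open.
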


We remark that Conjecture \ref{con} is closely related to the singularity of generalized Vandermonde matrices; a survey can be found in \cite[Section 4.5]{thesis}. The best known result, due to Shparlinski \cite{IS}, states that for a fixed integer $n$, as $q \to \infty$, almost all $n \times n$ generalized Vandermonde matrices over $\F_q$ are non-singular. However, it seems this result is not strong enough to prove Conjecture \ref{con}. We end the section by showing the following proposition.

\begin{prop}\label{t7}
Conjecture \ref{con} implies Conjecture \ref{conj1}.
\end{prop}

\begin{proof}
Let $m \in M$ be such that the last row is linearly independent with the first $|L(n)|$ rows in the matrix $A_{m,n}$. Then in view of the proof of Lemma \ref{t6}, there exists a $n$-subset $D$ of $C$ such that the system \eqref{refined_system} has a solution. Since $n \leq m \leq \frac{q-1}{2},\binom{n-1+\frac{q-1}{2}}{m} \not \equiv 0 \pmod p$, by Theorem \ref{t5}, we have $n(N-2) \leq \frac{q-3}{2}$. By the construction of $n$, we have $n \leq N < n+p^{s-1}$. Then we get $(N-p^{s-1})(N-2) \leq \frac{q-3}{2}$,  and therefore
\begin{align*}
N 
&\leq \frac{p^{s-1}+2}{2}+\sqrt{\frac{q-3}{2}-2p^{s-1}+\bigg(\frac{p^{s-1}+2}{2}\bigg)^2}\\
&\leq \frac{p^{s-1}+2}{2}+\sqrt{\frac{q-1}{2}+\frac{p^{2s-2}}{4}}\\
&\leq \frac{p^{s-1}+2}{2}+\sqrt{\frac{q}{2}}+\frac{p^{2s-2}}{8\sqrt{\frac{q}{2}}}\\
&<\sqrt{\frac{q}{2}}+\frac{12+\sqrt{2}}{8}p^{s-1}.
\end{align*}
\end{proof}

\section*{Acknowledgement}
The author would like to thank J\'ozsef Solymosi and Ethan White for their valuable suggestions, and Greg Martin and Joshua Zahl for helpful discussions. The author also would like to thank the anonymous referees for a careful reading of the draft.

\end{document}